\documentclass[11pt]{article}

\usepackage[margin=1in]{geometry}
\usepackage{hyperref}

\usepackage{graphicx}    
\usepackage{amsmath,amsthm,amsfonts,amssymb,amscd}
\usepackage{array,multirow,longtable,hhline}
\usepackage{mathrsfs}
\usepackage[dvipsnames,svgnames,x11names]{xcolor}
\usepackage[numbers]{natbib}

\newtheorem{theorem}{Theorem}[section]
\newtheorem{lemma}[theorem]{Lemma}

\newtheorem{corollary}[theorem]{Corollary}

\theoremstyle{definition}

\newtheorem{example}[theorem]{Example}

\theoremstyle{remark}
\newtheorem{remark}[theorem]{Remark}
\newtheorem{prb}[theorem]{Problem}

\numberwithin{equation}{section}

\usepackage{authblk}

\title{Local Antimagic Chromatic Number of Middle Graph and Direct Product of Graphs}
\author{
	Huimin Xu\thanks{School of Mathematics and Statistics, Qingdao University, Qingdao 266071, China. \texttt{huimin\_xu0901@163.com}}
	\and
	Gee-Choon Lau\thanks{Corresponding author. 77D, Jalan Suboh, 85000 Johor, Malaysia. \texttt{geechoonlau@yahoo.com}}
	\and
	Ruixue Zhang\thanks{School of Mathematics and Statistics, Qingdao University, Qingdao 266071, China. \texttt{ruixuezhang7@163.com}}
}
	
\begin{document}

	\maketitle
	\begin{abstract}
		An edge labeling of a connected graph $G=(V, E)$ is called a local antimagic labeling if it is a bijection $f: E\rightarrow \{1, 2, \cdots, \left | E \right | \}$ such that for any pair of adjacent vertices $u$ and $v$, $f^{+}(u)\neq f^{+}(v)$ where $f^{+}(x)=\sum f(e)$, with $e$ ranging over all the edges incident to $x$. The local antimagic chromatic number of $G$, denoted $\chi_{la}(G)$, is the minimum number of distinct induced vertex labels across all possible local antimagic labelings of $G$. This paper investigates two families of graphs $G(m^{[n]})$ (a graph class constructed by augmenting a bipartite structure $B(m^{[n]})$ with cyclic edges) and $K_{1,s}\times C_n$ (the direct product of a star $K_{1,s}$ and a cycle $C_n$). Under the conditions  odd integer $m\ge 1$ and even integer $n\ge 4$, we establish the exact values of $\chi_{la}(G(m^{[n]}))$ and $\chi_{la}(K_{1,s}\times C_n)$ for various $s$. Furthermore, we characterize several graph families achieving $\chi_{la}(G)=2$ or $3$.
	\end{abstract}
	
	{\bf Keywords:} Local antimagic labeling, Local antimagic chromatic number, Middle graph, Direct product.\\



\section{Introduction}
Let $G=(V, E)$ be a connected simple graph of order $p$ and size $q$. Arumugam et al. \cite{arumugam2017local} introduced the concept of local antimagic labeling and local antimagic chromatic number.
A bijection $f: E\rightarrow \{1, 2, \cdots, q\}$ is called a \textit{local antimagic labeling}
if $f^{+}(u)\neq f^{+}(v)$ for any edge $uv\in E$, 
where $f^{+}(x)=\sum_{e\in E(x)}f(e)$ and $E(x)$ is the set of edges incident to $x$.
The \textit{color number} of a local antimagic labeling $f$ is the number of distinct induced colors under $f$, denoted by $c(f)$.
Such a labeling $f$ is referred to as a \textit{local antimagic $c(f)$-coloring}.
The \textit{local antimagic chromatic number} $\chi_{la}(G)$ is the 
minimum $c(f)$ over all possible local antimatic labelings of $G$. Haslegrave resolved a conjecture posed in \cite{arumugam2017local}, proving that every connected graph other than $K_2$ admits a local antimagic labeling \cite{haslegrave2018proof}. Thus, every graph without a $K_2$ component admits a local antimagic labeling.

Let $G$ and $H$ be two connected graphs.
The \textit{disjoint union} of $G$ and $H$, denoted $G+H$, is the graph with vertex set $V(G)\cup V(H)$ and edge set $E(G)\cup E(H)$.
The \textit{join} of $G$ and $H$, denoted $G\vee H$, is the graph with vertex set $V(G)\cup V(H)$ and edge set $E(G)\cup E(H)\cup \left\{(x,y)\mid x\in V(G), y\in V(H)\right\}$. 
The bounds of the local antimagic chromatic number for disjoint union of multiple copies of a graph are estimated in \cite{math9111230}.
Investigations into the local antimagic chromatic number for the join of certain graphs can be found in \cite{yang2021local, premalatha2023local, lau2021local}. The local antimagic crhomatic number for various families of graphs, including regular graphs, spider graphs, and graphs with cut-vertices, is determined in
\cite{lau2023join, lau2022local, lau2024local} respectively. 
In \cite{lau2020affirmative}, Lau et al. established the local antimagic chromatic number of complete bipartite graphs.

For positive integers $m$ and $n$ ($\ge 2$), let $B(m^{[n]})$ be a bipartite graph with bipartition
\begin{equation*}
	\left(\bigcup_{i=1}^n X_i, \bigcup_{i=1}^n V_i\right)
\end{equation*}
and  where  $\bigcup_{i=1}^nX_i = \left\{u_1,u_2,\cdots,u_n\right\}$, $X_i=\left\{u_i,u_{i+1}\right\}$ (with $u_{n+1}=u_1$) and $V_i = \left\{v_{i,1}, v_{i,2}, \cdots, v_{i,m}\right\}$ for $i=1,2, \cdots, n$. The edge set of $B(m^{[n]})$ is $\bigcup_{i=1}^nE_i$, where $E_i=E(K_{2,m}^i)$ and $K_{2,m}^i$ is a complete bipartite graph with bipartition $(X_i, V_i)$. For example, $B(1^{[4]})$ is in Figure~\ref{fig: graph1}. Notably, $B(1^{[n]}) \cong C_{2n}$ and $B(m^{[2]}) \cong K_{2,2m}$. In~\cite{lau2021local}, Lau et al. determined the exact values of the local antimagic chromatic number of $B(m^{[n]})$ for various values of $m$ and $n$. 

For positive integers $m$ and $n$ $(\ge 3)$, let $G(m^{[n]})$ be the graph obtained from $B(m^{[n]})$ by adding $n$ edges joining vertices $u_i$ and $u_j$ such that (i) $i \not\equiv j \pmod{2}$ with the exception that if $n$ is odd, there is exactly one odd $i\geq 3$ where $u_1$ is adjacent to $u_i$, and (ii) the $n$ edges induce an $n$-cycle. For example, $G(1^{[4]})$ is shown in Figure~\ref{fig: graph2}.
Without loss of generality, we will always assume that the added edges induce an $n$-cycle consisting of consecutive edges $e_1,e_2,\ldots,e_n$, where $u_1$ is incident to edges $e_1$ and $e_n$, unless otherwise stated. 

The \textit{middle graph} $M(G)$ of a graph $G$ \cite{yan2017enumeration} 
is defined with
the vertex set $V(G)\cup E(G)$ and 
two vertices $x, y\in V(M(G))$ are adjacent in $M(G)$ if either (i) $x, y\in E(G)$, and $x, y$ are adjacent in $G$; or (ii)$x\in V(G), y\in E(G)$, and $x, y$ are incident in $G$. Note that $G(1^{[n]})$ is also the middle graph of $C_n$ if the induced $n$-cycle is $u_1u_2\cdots u_n$.

\begin{figure}[ht]
\centering
\begin{minipage}{0.35\textwidth}
    \centering
    \includegraphics[width=\textwidth]{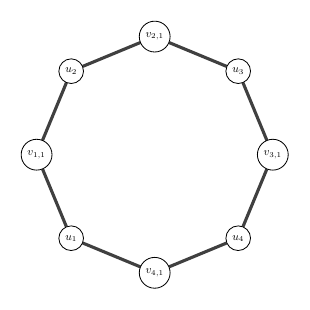}
    \caption{$B(1^{[4]})$}
    \label{fig: graph1}
\end{minipage}
\begin{minipage}{0.35\textwidth}
    \centering
    \includegraphics[width=\textwidth]{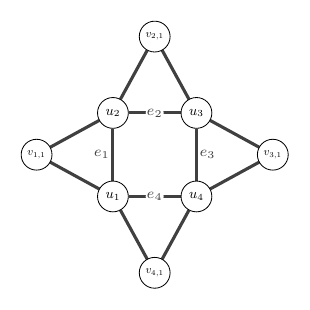}
    \caption{$G(1^{[4]})$}
    \label{fig: graph2}
\end{minipage}
\end{figure}

Let $G=(V(G),E(G))$ and $H=(V(H),E(H))$ be two graphs. The direct product, denoted by $G\times H$, is a graph with vertex set $V(G)\times V(H)=\left\{(u,v)\mid u\in V(G),v\in V(H)\right\}$ and edge set $E(G\times H)=\{\left(\left(u,v\right), \left(u',v'\right)\right)\mid$ $ (u,u')\in(E(G)) \text{ and } (v,v')\in(E(H))\}$. The direct product $K_{1,s}\times C_n$ with even $n\ge 4$ is also related to $B(s^{[n/2]})$.

In this paper, we show that graphs $G(m^{[n]})$ with odd $m\ge 1$ and even $n\ge 4$ have a local antimagic chromatic number of $3$. Additionally, the direct product $K_{1,s}\times C_n$ has a local antimagic chromatic number $2$ for even $n\ge 4$, even $s\ge 2$ or $s = n/2$. Moreover, certain graphs with local antimagic chromatic numbers $2$ or $3$ are attained by splitting or merging of vertices. To the best of our knowledge, this is the first paper to successfully determine the local antimagic chromatic number of the middle graph of a graph family and the direct product of two graph families.

\section{Preliminaries}
In this section, we shall introduce some basic knowledge and lemmas which will be used to prove our main results later. By definition, we have for any non-empty graph $G$ without any $K_2$ component, $\chi_{la}(G)\ge \chi(G) \ge 2$ where $\chi(G)$ is the chromatic number of $G$.

\begin{lemma}\label{lem-Gmn-tri} For any positive integer $m$ and even $n\ge 4$, $\chi(G(m^{[n]})) = 3$. 
\end{lemma}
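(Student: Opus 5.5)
The plan is to prove the two inequalities $\chi(G(m^{[n]}))\ge 3$ and $\chi(G(m^{[n]}))\le 3$ separately.

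\emph{Lower bound.} I will exhibit an odd cycle, namely a triangle. Every $v_{1,j}$ is adjacent to both vertices of $X_1=\{u_1,u_2\}$. Condition (i) of the construction requires each added edge $u_iu_j$ to satisfy $i\not\equiv j \pmod 2$. The added edges form an $n$-cycle through all of $u_1,\dots,u_n$. So $u_1$ has two neighbours $u_a,u_b$ on this cycle, with $a,b$ both even. For some $i$, the added edge $u_1u_a$ joins two vertices $u_1,u_a$ that are not necessarily consecutive. Taking $u_a$ and $u_1$, I need some $V_i$ adjacent to both of them, which holds only if $a=2$ or $a=n$.

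This suggests a better route: use the edge $u_1u_a$ together with the path $u_1 v_{1,1} u_2$. In general I will look for an odd cycle built from the $u$-cycle and the bipartite part. Every added edge joins an odd-indexed vertex to an even-indexed one. Likewise every path $u_i v_{i,1} u_{i+1}$ has length $2$ and changes the index parity by one. Hence assigning colour by parity alone fails to give a proper colouring, and I exploit exactly this. Take an added edge $u_iu_j$ with $i<j$ and $j-i$ odd. Go from $u_i$ to $u_j$ along the consecutive paths $u_i v_{i,1} u_{i+1} v_{i+1,1}\cdots u_j$, which have length $2(j-i)$. Closing with the edge $u_ju_i$ gives a cycle of length $2(j-i)+1$, which is odd. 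The vertices on this cycle are distinct, so it is a genuine cycle. Therefore $\chi\ge 3$.

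\emph{Upper bound.} I will construct a proper $3$-colouring. Since $n$ is even, colour $u_i$ with colour $1$ when $i$ is odd and colour $2$ when $i$ is even. Every added edge joins indices of opposite parity, so these edges are properly coloured. Every $v_{i,j}$ is adjacent only to $u_i$ and $u_{i+1}$, and the set $V_i$ is independent, so all $v$-vertices can receive colour $3$. Evenness of $n$ is used exactly here: it guarantees that $u_n$ and $u_{n+1}=u_1$ have opposite parity. In fact it is not even needed, since colour $3$ on the $v$'s suffices regardless. The more important point is that the exceptional odd-$n$ edge in condition (i) does not occur when $n$ is even.

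The main subtlety, which is mild, is confirming that the parity constraint (i) holds for all $n$ added edges when $n$ is even. This is what makes the $u$-vertices $2$-colourable and also forces the odd cycle in the lower bound.
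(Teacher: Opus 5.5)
Your argument is correct and is essentially the paper's: your odd cycle, built from an added edge $u_iu_j$ and the path $u_iv_{i,1}u_{i+1}\cdots u_j$, is the explicit form of the paper's observation that in a $2$-colouring the even cycle $u_1v_{1,j}u_2v_{2,j}\cdots u_nv_{n,j}u_1$ forces all $u_i$ into one colour class, contradicting the added edges; your upper bound uses the same partition into the odd-indexed $u_i$, the even-indexed $u_i$, and all the $v_{i,j}$. One small correction: $2(j-i)+1$ is odd for every $i<j$, so the parity condition (i) is needed only for the upper bound, not, as you claim at the end, to force the odd cycle.
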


\begin{proof} 
Suppose that $\chi(G(m^{[n]})) = 2$, thus there exists $j\in \{1, 2, \ldots, m\}$ such that the cycle $u_1v_{1,j}u_2v_{2,j}u_{3}v_{3,j}$ $\cdots u_{n}v_{n,j}u_1$
must have the vertices $u_i$ (for $i=1,2,\cdots,n$) all belonging to the same color set, while $v_{i,j}$ (for $i=1,2,\cdots,n$) must be in another color set.
This leads to a contradiction,
as there are edges among the vertices $u_i$ (for $i=1,2,\cdots,n$), implying that they must belong to different color sets. Therefore, $\chi(G(m^{[n]}))\ge 3$.  	 
By definition of $G(m^{[n]})$, $G(m^{[n]})$ have $3$-independent partitions given by $V = \{v_{i,1}, \ldots, v_{i,m} \mid 1\le i\le n\}$, $U_1 = \{u_{2i-1}\mid 1\le i\le n/2\}$ and $U_2 = \{u_{2i}\mid 1\le i\le n/2\}$, which implies $\chi(G(m^{[n]})) \le 3$. 
The proof is completed.
\end{proof} 

\begin{lemma}\label{lem-B}\cite{lau2021local}
 Let $s\ge 2$ and $n\ge 3$ be positive integers. For any even $s$, $\chi_{la}(B(s^{[n]}))=2$.
\end{lemma}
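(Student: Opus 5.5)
Lemma~\ref{lem-B} is quoted from \cite{lau2021local}, so within this paper it simply needs the citation. If I had to prove it myself, I would use the lower bound $\chi_{la}(B(s^{[n]}))\ge \chi(B(s^{[n]}))=2$, which holds because the graph is bipartite with no $K_2$ component. The work is then in building a labeling that uses exactly two induced colors: one common value on all the $u_i$ and one common value on all the $v_{i,j}$.

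\textbf{Degree structure.} Each $v_{i,j}$ has degree $2$ and lies on exactly two edges, $u_iv_{i,j}$ and $u_{i+1}v_{i,j}$. Each $u_i$ has degree $2s$. The graph has $q=2sn$ edges. I would therefore try to give every $v_{i,j}$ the sum $2sn+1$. This means pairing labels $a$ and $2sn+1-a$ on the two edges at each $v_{i,j}$, so the $sn$ complementary pairs $\{a,2sn+1-a\}$ are assigned bijectively to the vertices $v_{i,j}$.

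\textbf{Balancing the $u_i$.} Each $u_i$ receives $s$ labels from the block $V_{i-1}$ (its "right" edges) and $s$ labels from $V_i$ (its "left" edges). For block $V_i$, let $A_i$ be the sum of the labels on edges to $u_i$. The edges to $u_{i+1}$ then sum to $s(2sn+1)-A_i$. Hence
\[
f^+(u_{i+1}) = s(2sn+1)-A_i + A_{i+1}.
\]
This is constant, equal to $s(2sn+1)$, exactly when all $A_i$ are equal. The task reduces to partitioning the $sn$ complementary pairs into $n$ groups of $s$ pairs, and orienting each pair (choosing which element goes to $u_i$), so that every group has the same sum $A$ on the $u_i$ side. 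Since $s$ is even, I would split each group into $s/2$ "double pairs". Take two pairs $\{a,2sn+1-a\}$ and $\{b,2sn+1-b\}$ and orient them so the $u_i$-side receives $a$ and $2sn+1-b$. The contribution is then $2sn+1+(a-b)$. Choosing double pairs with $a-b$ arranged to cancel across each group gives every group the $u_i$-sum $\tfrac{s}{2}(2sn+1)$. Concretely, the $sn/2$ double pairs can be taken as consecutive pairs $(a,a+1)$ oriented alternately, contributing $2sn$ or $2sn+2$. These are then distributed so each group gets equally many of each kind, or more simply so that each group's total deviation is $0$.

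\textbf{Main obstacle.} The hardest step is this bookkeeping: the total deviation per group must vanish while the labels still form a bijection, and in particular the parity and divisibility must work for all $n\ge3$, not just even $n$. I expect to handle it with a Latin-square-style assignment of the pairs. Finally, I must check that $s(2sn+1)\neq 2sn+1$, which holds since $s\ge2$. This confirms that the labeling is local antimagic with exactly two colors, so $\chi_{la}(B(s^{[n]}))=2$.
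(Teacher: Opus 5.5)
Your reduction is sound. With complementary pairs at every $v_{i,j}$ you correctly get $f^+(u_{i+1})=s(2sn+1)+A_{i+1}-A_i$, so it suffices that all $A_i$ be equal. The paper only cites this lemma, but the labeling it writes down for $B(s^{[n]})$ in Section~5 aims at the same two colors, $2ns+1$ and $2ns^2+s$, so your framework matches. The gap is in the concrete step. You fix the target $A=\frac{s}{2}(2sn+1)$, i.e.\ zero total deviation in every block. Your double pairs $(a,a+1)$ each deviate by $\pm1$, and a block contains $s/2$ of them, so zero deviation needs $s/4$ of each kind. That works when $4\mid s$, but is impossible when $s\equiv 2\pmod 4$, since an odd number of $\pm1$'s cannot sum to $0$. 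For $s=2$ the target cannot be met by any grouping at all. A block carries two pairs $\{a,4n+1-a\}$ and $\{b,4n+1-b\}$ with $a\neq b$ in $[1,2n]$, and the possible $u_i$-side sums $a+b\le 4n-1$, $4n+1\pm(a-b)$, $8n+2-a-b\ge 4n+3$ all differ from $4n+1$. So as written your construction fails for $s=2,6,10,\ldots$, and the promised Latin-square bookkeeping is never supplied. The parity of $n$, which you flag as the danger, plays no role.

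The repair follows from your own identity: drop the zero-deviation requirement and orient every double pair the same way. For $1\le i\le n$, $1\le j\le s/2$ and $t=(i-1)\frac{s}{2}+j$ (this is where $s$ even is used), put
\[
f(u_iv_{i,2j-1})=2t-1,\quad f(u_{i+1}v_{i,2j-1})=2sn+2-2t,\quad f(u_iv_{i,2j})=2sn+1-2t,\quad f(u_{i+1}v_{i,2j})=2t .
\]
This uses each of $1,\dots,2sn$ exactly once, and every $v_{i,j}$ gets $2sn+1$. Every $A_i=s^2n$, and the $u_{i+1}$-side of each block sums to $s^2n+s$. Hence $f^+(u_i)=2s^2n+s$ for every $i$ and every $n\ge 3$, which differs from $2sn+1$ because $s\ge 2$. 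This completes the upper bound with the same two colors the paper uses. Note that the explicit formulas printed in Section~5 are not a bijection as written: $2i-1+n(2j-2)$ and $2n-2i+1+n(2j-2)$ range over the same odd numbers (e.g.\ $u_1v_{1,1}$ and $u_nv_{n,2}$ both get label $1$). So do not rely on them.
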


\begin{lemma}\label{lem-q+1-f}\cite{lau2021local}
Let $G$ be a graph of size $q$ and $f$ be a local antimagic labeling of $G$. 
For any vertices $x, y\in V(G)$, if
	
	\begin{itemize}
		\item[(i)] $f^+(x)=f^+(y)$ implies that $deg(x) = deg(y)$, and
		\item[(ii)] $f^+(x) \ne f^+(y)$ implies that $(q+1)(deg(x)-deg(y))\ne f^+(x)-f^+(y)$,
	\end{itemize}
where $deg(x)$ is the degree of $x$ in $G$,
then the mapping $g: e\rightarrow q+1-f(e)$
is also a local antimagic labeling of $G$ with $c(f)=c(g)$, where $e\in E(G)$.
\end{lemma}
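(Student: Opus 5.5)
The statement to prove is Lemma~\ref{lem-q+1-f}: under (i) and (ii), the complementary labeling $g(e)=q+1-f(e)$ is again local antimagic and $c(g)=c(f)$.

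The plan is to compute $g^+$ explicitly in terms of $f^+$ and the degree, and then read off both conclusions. First, $g$ is a bijection from $E(G)$ onto $\{1,\ldots,q\}$, since $t\mapsto q+1-t$ is an involution of $\{1,\ldots,q\}$. For each vertex $x$, summing over the $deg(x)$ edges incident to $x$ gives
\begin{equation*}
g^+(x)=\sum_{e\in E(x)}\bigl(q+1-f(e)\bigr)=(q+1)deg(x)-f^+(x).
\end{equation*}
Hence, for any two vertices $x,y$,
\begin{equation*}
g^+(x)-g^+(y)=(q+1)\bigl(deg(x)-deg(y)\bigr)-\bigl(f^+(x)-f^+(y)\bigr).
\end{equation*}

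The key claim is that for all $x,y\in V(G)$ we have $g^+(x)=g^+(y)$ if and only if $f^+(x)=f^+(y)$. Suppose $f^+(x)=f^+(y)$. Then (i) gives $deg(x)=deg(y)$, so the displayed difference vanishes and $g^+(x)=g^+(y)$. Suppose instead $f^+(x)\neq f^+(y)$. Then (ii) says $(q+1)(deg(x)-deg(y))\neq f^+(x)-f^+(y)$, which is exactly the statement that the displayed difference is nonzero, so $g^+(x)\neq g^+(y)$.

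Both conclusions follow from this equivalence.
\begin{itemize}
\item Applied to adjacent $x,y$: since $f$ is local antimagic, $f^+(x)\neq f^+(y)$, hence $g^+(x)\neq g^+(y)$. So $g$ is a local antimagic labeling.
\item Applied to all pairs: the equivalence says $f^+$ and $g^+$ induce the same partition of $V(G)$ into color classes. Therefore the number of distinct induced colors is the same, i.e.\ $c(f)=c(g)$.
\end{itemize}

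No step is a genuine obstacle. The only point needing care is the direction in which each hypothesis is used: (i) is needed so that equal $f$-colors stay equal under $g$, and (ii) is needed so that distinct $f$-colors stay distinct under $g$. Without (i), two vertices sharing an $f$-color but of different degrees would receive different $g$-colors, and the color count could change.
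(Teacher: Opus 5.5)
Your proof is correct: the identity $g^+(x)=(q+1)\deg(x)-f^+(x)$, combined with (i) and (ii), gives $g^+(x)=g^+(y)$ if and only if $f^+(x)=f^+(y)$, and this yields both local antimagicness and $c(g)=c(f)$. The paper quotes this lemma from earlier work without reproducing a proof, and your direct computation is the standard argument for it.
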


\begin{lemma}\label{lem-delete-edge}\cite{lau2021local}
Let $H$ be a graph obtained from $G$ by deleting an edge $e$ and $f$ be a local antimagic labeling of $G$ with $f(e)=1$.
For positive integer $t\geq 2$, if $G$ satisfies the following conditions:
	\begin{itemize}
		\item[(i)] $\chi_{la}(G) = t$ and $f$ is a local antimagic labeling of $G$ that induces a $t$-independent partition $\bigcup_{i=1}^{t}V_i$ of $V(G)$;
		\item[(ii)] for each vertex $x\in V_k$ $(1\le k\le t)$, degree of $x$ is constant $d_k$;
		\item[(iii)] $f^+(u)-d_a \ne f^+(v)-d_b$, where $u\in V_a$ and $v\in V_b$ for $1\le a \ne b\le t$,
	\end{itemize}
     then $\chi(H)\le \chi_{la}(H)\le t$.
\end{lemma}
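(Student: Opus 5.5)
The plan is to build, from the given labeling $f$ of $G$, an explicit local antimagic labeling of $H$ that uses exactly $t$ colors. The lower bound $\chi(H)\le\chi_{la}(H)$ is the general fact from the start of this section: a local antimagic labeling gives a proper coloring. So the real content is the upper bound $\chi_{la}(H)\le t$.

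\textbf{Step 1: the new labeling.} Let $q=|E(G)|$, so $|E(H)|=q-1$. Define $g:E(H)\to\{1,\ldots,q-1\}$ by $g(e')=f(e')-1$. Since $f$ is a bijection onto $\{1,\ldots,q\}$ and the deleted edge $e$ carries the label $1$, the edges of $H$ carry exactly the labels $\{2,\ldots,q\}$ under $f$. Hence $g$ is a bijection onto $\{1,\ldots,q-1\}$.

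\textbf{Step 2: the induced vertex sums.} I will show that $g^+(x)=f^+(x)-d_k$ for every $x\in V_k$; this is the key computation.
\begin{itemize}
\item If $x$ is not an endpoint of $e$, then $x$ meets $d_k$ edges of $H$, and each label drops by $1$. So $g^+(x)=f^+(x)-d_k$.
\item If $x$ is an endpoint of $e$, then $x$ loses the edge $e$ with $f(e)=1$. It keeps $d_k-1$ edges, each label dropping by $1$. So $g^+(x)=f^+(x)-1-(d_k-1)=f^+(x)-d_k$.
\end{itemize}
The only point needing care is this endpoint case. It works precisely because $f(e)=1$, which makes the loss of $e$ exactly compensate for the missing $-1$.

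\textbf{Step 3: counting and separating colors.} By (i), $V_k$ is a color class of $f$, so $f^+$ takes a constant value $c_k$ on $V_k$. By (ii), $g^+$ is then constant, equal to $c_k-d_k$, on $V_k$. Condition (iii) says these $t$ values $c_k-d_k$ are pairwise distinct. So $g$ induces exactly $t$ distinct colors, with color classes $V_1,\ldots,V_t$.

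\textbf{Step 4: validity.} Any edge $xy$ of $H$ is an edge of $G$. Since $\bigcup_{k}V_k$ is an independent partition, $x$ and $y$ lie in different classes $V_a$ and $V_b$ with $a\ne b$. Therefore $g^+(x)\ne g^+(y)$ by Step 3, so $g$ is a local antimagic labeling of $H$ with $c(g)=t$, giving $\chi_{la}(H)\le t$.
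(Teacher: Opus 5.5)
Your proof is correct: the shift $g=f-1$ on $E(H)$ gives $g^+(x)=f^+(x)-\deg_G(x)$ at every vertex, since $f(e)=1$ exactly absorbs the lost edge at the endpoints of $e$, and (i)--(iii) then make $V_1,\dots,V_t$ the color classes of $g$ with pairwise distinct colors. The paper gives no proof of this lemma (it is quoted from earlier work of Lau et al.), and your argument is the standard proof of it.
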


Moreover, the numbers discussed in this paper are integers, unless otherwise specified.

\section{Local antimagic chromatic number of $G(m^{[n]})$ for odd $m\ge 1$ and even $n\ge 4$}


In this section, we shall calculate the local antimagic chromatic number of $G(m^{[n]})$
when $m$ is odd $(m\ge 1)$ and $n$ is even $(n\ge 4)$. We will also obtain some families of tripartite graphs with local antimagic chromatic number $3$. Let $\delta=n/2$.

\begin{theorem}\label{th-Gmn}
	For odd $m\ge 1$ and even $n\ge 4$, $\chi_{la}(G(m^{[n]})) = 3$.
\end{theorem}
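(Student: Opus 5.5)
The lower bound is immediate: $\chi_{la}(G)\ge\chi(G)$ for any graph without a $K_2$ component, and Lemma~\ref{lem-Gmn-tri} gives $\chi(G(m^{[n]}))=3$, so $\chi_{la}(G(m^{[n]}))\ge 3$. The work is the upper bound. There I will construct an explicit local antimagic labeling $f$ whose induced colour classes are exactly the three independent sets of Lemma~\ref{lem-Gmn-tri}: $V=\{v_{i,j}\}$, $U_1=\{u_{2i-1}\}$ and $U_2=\{u_{2i}\}$. Here $q=2mn+n$. Each $v_{i,j}$ has degree $2$ and each $u_i$ has degree $2m+2$: it has $m$ edges into $V_{i-1}$, $m$ edges into $V_i$, and two cycle edges.

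\textbf{Choice of labels.} I will give the $2mn$ edges of $B(m^{[n]})$ the labels $1,\dots,2mn$, and the $n$ cycle edges $e_1,\dots,e_n$ the labels $2mn+1,\dots,2mn+n$. Each path $u_iv_{i,j}u_{i+1}$ gets a complementary pair $\{a,\,2mn+1-a\}$, so every $v_{i,j}$ has colour $2mn+1$. Every $u_i$ has colour larger than $2\cdot(2mn+1)$, since it receives at least $2m$ labels plus two labels exceeding $2mn$. Hence $V$ never clashes with $U_1\cup U_2$, and it only remains to make $f^+$ constant on $U_1$ and on $U_2$, with distinct values.

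\textbf{Balancing the $u_i$.} For each $i$, the pair on path $(i,j)$ contributes one label to $u_i$ and its complement to $u_{i+1}$. So the $B$-part of $f^+(u_i)$ equals
\[
\sum_j a_{i,j}+\sum_j\bigl(2mn+1-a_{i-1,j}\bigr),
\]
where $a_{i,j}$ is the label on $u_iv_{i,j}$. I will organise the $mn$ pairs as an $n\times m$ array of ``small'' labels $a_{i,j}$ taken from suitable pairs and prescribe the row sums $R_i=\sum_j a_{i,j}$. Then $f^+(u_i)=R_i-R_{i-1}+m(2mn+1)+c_i$, where $c_i$ is the sum of the two cycle labels at $u_i$.

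The cycle edges have consecutive labels, so I will place them as $2mn+1,\,2mn+n,\,2mn+2,\,2mn+n-1,\dots$ in an alternating zigzag. This makes the $c_i$ take essentially two or three values alternating with parity, and the small discrepancies are then absorbed by choosing the differences $R_i-R_{i-1}$. Concretely, I want $R_i-R_{i-1}$ to correct $c_i$ so that the totals on odd indices agree, the totals on even indices agree, and the odd and even totals differ. The fact that $n$ is even ensures the alternating parity pattern closes up around the cycle, since $\sum_i (R_i-R_{i-1})=0$ must hold.

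\textbf{Main obstacle.} The hard step is realising prescribed row sums $R_i$ with an $n\times m$ array drawn from complementary pairs when $m$ is odd. For even $m$, each row could take balanced pairs as in Lemma~\ref{lem-B}. For odd $m$, I will write $m=1+(m-1)$ and handle the two parts separately:
\begin{itemize}
\item the $m-1$ columns are filled in balanced pairs, $a$ and $2mn+1-a$ placed in adjacent columns of the same row, so their row contribution is a constant;
\item the single remaining column, which carries labels from $\{1,\dots,n\}$ and their complements, is chosen to be exactly the case $m=1$, i.e.\ the middle graph $M(C_n)$.
\end{itemize}
So I will first solve $m=1$ by hand, choosing $a_{i,1}$ so that $R_i-R_{i-1}$ cancels the zigzag cycle sums. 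I will verify this case on $n=4$ and $n=6$ before generalising. Then I will lift the solution to general odd $m$ by shifting labels. The last check is that the two $u$-colours are distinct; this should follow because the zigzag gives odd and even vertices different constant totals.
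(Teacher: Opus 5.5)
\textbf{Verdict: genuine gap.} Your reduction is sound. Complementary pairs on the paths $u_iv_{i,j}u_{i+1}$ give every $v_{i,j}$ the colour $2mn+1$, and $f^+(u_i)=R_i-R_{i-1}+m(2mn+1)+c_i$. But the upper bound \emph{is} the construction, and you never supply it. The single-column assignment is deferred (``I will first solve $m=1$ by hand \dots\ verify on $n=4$ and $n=6$''), so no labeling is exhibited.

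This step is not routine. Write $N=2mn$ and $\delta=n/2$. Your zigzag gives $c_i=2N+n+1$ for even $i$, $c_i=2N+n+2$ for odd $i\ge 3$, and $c_1=2N+n+2-\delta$. Since the other columns contribute constant row sums, you need $d_i=a_{i,1}-a_{i-1,1}$ to equal some $\alpha$ at odd $i\ge 3$, $\alpha+\delta$ at $i=1$, and $-1-\alpha$ at even $i$. Several of your intermediate steps fail as written:
\begin{itemize}
\item The naive correction (all differences zero except at $u_1$) is impossible, because consecutive rows of one column cannot carry the same label.
\item Lifting by shifting labels fails in general. If the $m=1$ solution puts the large end $2n+1-k$ on some $u_iv_{i,1}$, the lift turns it into $2mn+1-k$. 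That changes its differences with the neighbouring rows by $2(m-1)n$. You need every $a_{i,1}$ to lie in $\{1,\dots,n\}$.
\item Your recipe for the other $m-1$ columns (``$a$ and $2mn+1-a$ in adjacent columns of the same row'') uses each of those labels twice. You need two \emph{different} pairs per row with $a_{i,j}+a_{i,j+1}$ independent of $i$, as in Lemma~\ref{lem-B}.
\item The two $u$-colours are not separated by the zigzag, which separates the parities by only $1$. In your framework they differ for another reason: equality would force $d_i=0$ at even $i$.
\end{itemize}

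The plan can be completed. Take $a_{1,1}=n$, $a_{2k,1}=\delta+1-k$ for $1\le k\le\delta$, and $a_{2k+1,1}=n-k$ for $1\le k\le\delta-1$. Then $d_i=-\delta$ at even $i$, $d_i=\delta-1$ at odd $i\ge 3$, and $d_1=n-1$, which absorbs the defect $-\delta$ at $u_1$; the two $u$-colours then differ by $n$. For the remaining columns take, for example, $a_{i,2j}=(2j-1)n+i$ and $a_{i,2j+1}=(2j+1)n+1-i$ for $1\le j\le (m-1)/2$.

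You should also say why an arbitrary parity-alternating added cycle, not necessarily $u_1u_2\cdots u_n$, is harmless: $c_i$ depends only on the parity of $i$ and on whether $i=1$.

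The paper's proof is the mirror image of this. The cycle gets the labels $1,\dots,n$ in zigzag, giving an excess of $n$ at $u_1$. Explicit formulas assign the path labels $n+1,\dots,(2m+1)n$ in pairs summing to $2mn+2n+1$. Their contribution is the same at every $u_i$ except for a shortfall of exactly $n$ at $u_1$, coming from the wrap-around.
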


\begin{proof}
    By Lemma~\ref{lem-Gmn-tri}, $\chi_{la}(G(m^{[n]}))\ge \chi(G(m^{[n]}))= 3$.\\ Define $f: E(G(m^{[n]}))\to \left\{1,2,\cdots, (2m+1)n\right\}$ such that
    \begin{itemize}
        \item[(1)] $f(e_{2i-1}) = n-(2i-2) \text{~for~} 1\le i\le \delta;$
        \item[(2)] $f(e_{2i}) = 2i-1 \text{~for~} 1\le i\le \delta;$
        \item[(3)] $f(u_iv_{i,2j-1}) =n+i+n(2j-2)\text{~for~} 1\le i\le n, 1\le j\le\dfrac{m+1}{2};$
        \item[(4)] $f(u_iv_{i,2j}) =(2m+1)n-n(2j-1)-(i-1),\\ f(u_{i+1}v_{i,2j}) =n+i+n(2j-1) \text{~for~} 1\le i\le n, 1\le j\le\dfrac{m-1}{2};$
        \item[(5)] $f(u_{i+1}v_{i,2j-1}) =(2m+1)n-n(2j-2)-(i-1)\text{~for~} 1\le i\le n, 1\le j\le\dfrac{m+1}{2};$
        
    \end{itemize}
    where $u_{n+1}=u_1$. This bijection indicates that
    \begin{flalign*}
    	\begin{aligned}
    		f^+(v_{i,2j-1})&=f(u_iv_{i,2j-1})+f(u_{i+1}v_{i,2j-1})\\
    		&=n+i+n(2j-2)+(2m+1)n-n(2j-2)-(i-1)\\
    		&=2nm+2n+1\text{~for~} 1\le i\le n\text{~and~} 1\le j\le\dfrac{m+1}{2},\\
    		f^+(v_{i,2j})&=f(u_iv_{i,2j})+f(u_{i+1}v_{i,2j})\\
    		&=(2m+1)n-n(2j-1)-(i-1)+n+i+n(2j-1)\\
    		&=2mn+2n+1\text{~for~} 1\le i\le n\text{~and~} 1\le j\le\dfrac{m-1}{2},\\
            f^+(u_{2i-1})&=f(e_{2i-1})+f(e_{2i})+\sum_{j=1}^{\frac{m+1}{2}}[f(u_{2i-1}v_{2i-1,2j-1})+f(u_{2i-1}v_{2i-2,2j-1})]
    		\\&+\sum_{j=1}^{\frac{m-1}{2}}[f(u_{2i-1}v_{2i-1,2j})+f(u_{2i-1}v_{2i-2,2j})]\\
    		&=n-1+\sum_{j=1}^{\frac{m+1}{2}}(2mn+2n+2)+\sum_{j=1}^{\frac{m-1}{2}}(2mn+2n)\\
    		&=2nm^2+2mn+m+n\text{~for~}2\le i\le \delta,\\
        \end{aligned}
    \end{flalign*}		
    
    \begin{flalign*}
    	\begin{aligned}
    		f^+(u_1)&=f(e_1)+f(e_n)+\sum_{j=1}^{\frac{m+1}{2}}[f(u_1v_{1,2j-1})+f(u_1v_{n,2j-1})]\\&+\sum_{j=1}^{\frac{m-1}{2}}[f(u_1v_{1,2j})+f(u_1v_{n,2j})]\\
    		&=2n-1+\sum_{j=1}^{\frac{m+1}{2}}(2mn+n+2)+\sum_{j=1}^{\frac{m-1}{2}}(2mn+3n)\\
    		&=2nm^2+2mn+m+n,\\
            f^+(u_{2i})&=f(e_{2i})+f(e_{2i-1})+\sum_{j=1}^{\frac{m+1}{2}}[f(u_{2i}v_{2i,2j-1})+f(u_{2i}v_{2i-1,2j-1})]
    		\\&+\sum_{j=1}^{\frac{m-1}{2}}[f(u_{2i}v_{2i,2j})+f(u_{2i}v_{2i-1,2j})]\\
    		&=n+1+\sum_{j=1}^{\frac{m+1}{2}}(2mn+2n+2)+\sum_{j=1}^{\frac{m-1}{2}}(2mn+2n)\\
    		&=2nm^2+2mn+m+n+2\text{~for~} 1\le i\le \delta.
    	\end{aligned}
    \end{flalign*}

    Therefore $f$ is a local antimagic $3-$coloring of $G(m^{[n]})$ so that $\chi_{la}(G(m^{[n]}))\le 3$. This completes the proof.
\end{proof}

\begin{example}
	$\chi_{la}(G(3^{[4]}))=3$, the local antimagic 3-coloring is shown in Figure~\ref{G(3^[4])}.
	\begin{figure}[ht]
        \centering
        \includegraphics[width=0.58\textwidth]{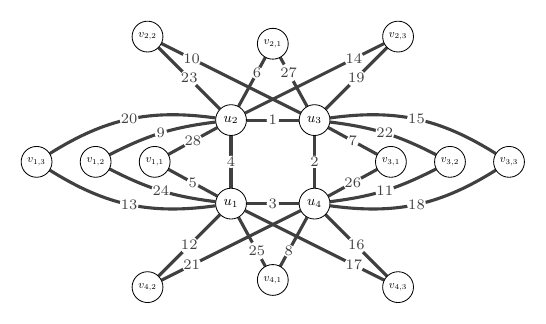}
        \caption{$G(3^{[4]})$}
        \label{G(3^[4])}
    \end{figure}
\end{example}

\begin{corollary}
	For any $e\in E(G(m^{[n]}))$, $\chi_{la}(G(m^{[n]})-e)=3$.
\end{corollary}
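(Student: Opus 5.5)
The plan is to prove the two inequalities $\chi_{la}(G(m^{[n]})-e)\ge 3$ and $\chi_{la}(G(m^{[n]})-e)\le 3$ separately. For the upper bound I will use Lemma~\ref{lem-delete-edge} with $t=3$, applied to the labeling $f$ from Theorem~\ref{th-Gmn} or to its complement.

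\textbf{Lower bound.} It suffices to show that $G(m^{[n]})-e$ still contains an odd cycle, since then $\chi_{la}\ge\chi\ge 3$. Take an added cycle edge $u_au_b$ with $e\ne u_au_b$; one exists because $n\ge 4$. Here $a\not\equiv b \pmod 2$. Every path in $B(m^{[n]})$ from $u_a$ to $u_b$ alternates between $u$'s and $v$'s, so its length is even. Together with the edge $u_au_b$ it therefore closes an odd cycle. If $e$ is a $B$-edge $u_iv_{i,j}$, we still have such a path avoiding $e$. Indeed, $B(m^{[n]})-e$ stays connected: for $m\ge2$ there is a parallel vertex $v_{i,j'}$, and for $m=1$ the graph $B(1^{[n]})\cong C_{2n}$ minus an edge is a Hamiltonian path. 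So the lower bound holds in every case.

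\textbf{Symmetry reduction.} Under the standing convention that the added edges form the cycle $u_1u_2\cdots u_n$, the graph has dihedral symmetry in the index $i$. It also allows arbitrary permutations of each set $V_i$. Hence all added cycle edges are equivalent, and all edges $u_iv_{i,j}$ and $u_{i+1}v_{i,j}$ are equivalent. It is therefore enough to treat one cycle edge and one $B$-edge.

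\textbf{Upper bound for a cycle edge.} In $f$ we have $f(e_2)=1$. The color classes are $V$ (degree $2$, sum $2mn+2n+1$), $U_1$ (degree $2m+2$, sum $S=2nm^2+2mn+m+n$) and $U_2$ (degree $2m+2$, sum $S+2$). Condition (iii) of Lemma~\ref{lem-delete-edge} asks that the three numbers
\[
2mn+2n-1,\qquad S-2m-2,\qquad S-2m
\]
be distinct. The last two differ by $2$. Also $S-2m-2-(2mn+2n-1)=2nm^2-n-m-1>0$, using $n\ge 4$ and $m\ge1$. Lemma~\ref{lem-delete-edge} then gives $\chi_{la}(G(m^{[n]})-e_2)\le 3$.

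\textbf{Upper bound for a $B$-edge.} Here I apply Lemma~\ref{lem-q+1-f} to $f$, with $q=(2m+1)n$, to get $g=q+1-f$. Condition (i) holds because equal colors occur only within one class, where degrees agree. For condition (ii), the only pairs with different degrees are a $u$ and a $v$. There $f^+(u)-f^+(v)\le 2nm^2-n+m+1$, which is strictly less than $(q+1)\cdot 2m=4m^2n+2mn+2m$. So $g$ is a local antimagic $3$-coloring with $g(u_2v_{1,1})=1$, because $f(u_2v_{1,1})=(2m+1)n=q$. The induced sums of $g$ are $(q+1)\deg(x)-f^+(x)$. Condition (iii) for $g$ then amounts to checking that
\[
2q+2-(2mn+2n+1)-2,\qquad (2m+2)(q+1)-S-(2m+2),\qquad (2m+2)(q+1)-S-2-(2m+2)
\]
are pairwise distinct. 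This is a routine inequality of the same type as above, since the $u$-values are of order $4m^2n$ while the $v$-value is of order $2mn$. Lemma~\ref{lem-delete-edge} then yields $\le 3$.

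I expect the main obstacle to be this verification of (ii) and (iii) for $g$, in particular making the inequalities hold down to the smallest case $m=1$, $n=4$.
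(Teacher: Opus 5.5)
Your proof follows the paper's proof. The lower bound comes from an odd cycle formed by a surviving added edge and an even $u$--$u$ path in $B(m^{[n]})-e$. The upper bound comes from Lemma~\ref{lem-delete-edge}, applied to $f$ (which has $f(e_2)=1$) and to $g=q+1-f$ from Lemma~\ref{lem-q+1-f} (which has $g(u_2v_{1,1})=1$). Your checks of the hypotheses of both lemmas are correct, and more complete than the paper's, which never checks condition (iii) for $g$.

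\textbf{The gap: the symmetry reduction.} It rests on a misreading of the paper's convention, and as written it is false. The paper does not assume the added cycle is $u_1u_2\cdots u_n$. It only names the cycle edges $e_1,\dots,e_n$ consecutively, with $u_1$ incident to $e_1$ and $e_n$. The cycle may be any $n$-cycle on $u_1,\dots,u_n$ whose edges join indices of opposite parity, and such a graph need not be dihedrally symmetric. For instance, take $n=6$ and the added cycle $u_1u_2u_3u_6u_5u_4u_1$. The endpoints of $u_1u_2$ have $m$ common neighbours, while those of $u_3u_6$ have none. So no automorphism maps one of these edges to the other, and ``all added cycle edges are equivalent'' fails.

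\textbf{The fix.} Use the freedom in building $f$ instead of automorphisms of the graph.
\begin{itemize}
\item Any dihedral re-indexing of $u_1,\dots,u_n$, carrying the blocks $V_i$ along, changes parities uniformly. So the added edges still join opposite parities, and the result is again a valid presentation of the same graph as a $G(m^{[n]})$.
\item Each $V_i$ may be relabelled, and the cycle labelling may start at the new $u_1$ in either direction.
\item For an added edge $xy$, take as the new $u_1$ the cycle neighbour of $x$ other than $y$. Then $xy=e_2$, which gets label $1$ under $f$.
\item For a $B$-edge $u_{k+1}v_{k,j}$, re-index with $u_k$ as the new $u_1$. For $u_kv_{k,j}$, re-index with $u_{k+1}$ as the new $u_1$ and the orientation reversed. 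In both cases list $v_{k,j}$ first in $V_k$, so the edge becomes $u_2v_{1,1}$. It gets label $q$ under $f$, hence label $1$ under $g$.
\end{itemize}
The paper's own appeal to ``the symmetry of $G(m^{[n]})$'' is equally loose and needs exactly this argument.
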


\begin{proof}
{\sloppy
	For $G(m^{[n]})$, as shown in Theorem~\ref{th-Gmn}, $f$ is a local antimagic labeling of $G(m^{[n]})$ that induces a $3$-independent partition $V= \left\{v_{i,j}\mid 1\le i\le n, 1\le j\le m\right\}$ , $U_1=\left\{u_{2i-1}\mid 1\le i \le \delta\right\}$, $U_2=\left\{u_{2i}\mid 1\le i \le \delta\right\}$ with label $2nm+2n+1$, $2nm^2+2nm+m+n$ and $2nm^2+2nm+m+n+2$ respectively which satisfies conditions $(ii)$ and $(iii)$ of Lemma~\ref{lem-delete-edge}. Now we prove that it is possible to assign label $1$ to any edge of $G(m^{[n]})$.
	
	By Lemma~\ref{lem-q+1-f}, $g=2nm+n+1-f$ is also a local antimagic labeling of $G(m^{[n]})$ with $c(f) = c(g)$, which with the symmetry of $G(m^{[n]})$ follows that we can assign label $1$ to any edge of $G(m^{[n]})$, by Lemma~\ref{lem-delete-edge}, $\chi(G(m^{[n]})-e)\le \chi_{la}(G(m^{[n]})-e) \le \chi_{la}(G(m^{[n]})) = 3$.

	Suppose $e=u_iu_j$. By the same argument in proving $\chi(G(m^{[n]})) \ge 3$ as in Lemma~\ref{lem-Gmn-tri}, we also can conclude that $\chi(G(m^{[n]})-e) \ge 3$. Suppose $e=u_iv_{i,j}$ or $v_{i,j}u_{i+1}$, the cycle considered in Lemma~\ref{lem-Gmn-tri} is changed to a path. Similarly, we also conclude that $\chi(G(m^{[n]})-e)\ge 3$. The proof is completed.} 

\end{proof}

\section{Graphs with local antimagic chromatic number 3}
In this section, we shall construct graphs whose local antimagic chromatic number are 3 based on $G(m^{[n]})$ by spliting or merging certain vertices.

\sloppy Construct a subset of vertices, let
$SM = \{u_1, u_i \in V(G(m^{[n]}))\mid u_i \mbox{ is incident to } e_{i-1} \mbox{ and } e_i, i \mbox{ is odd in } [3,n]\}$ . If $|SM|\ge 2$, then we can choose any $l\ge 2$ vertices $u_{a_1}, u_{a_2}, \ldots, u_{a_l}$ from $SM$, denoted by $x_1, x_2, \cdots, x_l$ respectively, such that $4\le |a_j-a_{j+1}|\le n-4$ for $1\le j\le l$ with $u_{a_{l+1}} = u_{a_1}$. For $1\le j\le l$, we split each $x_j$ into $x_j'$ and $x_j''$ such that (i) if $x_j \ne u_1$, then $x_j'$ is adjacent to $v_{a_j}$, incident to $e_{a_j}$, and $x_j''$ is adjacent to $v_{a_j-1}$, incident to $e_{a_j-1}$, and (ii) if $x_j = u_1$, then $x_j'$ is adjacent to $v_1$, incident to $e_1$, and $x_j''$ is adjacent to $v_n$, incident to $e_n$.

Merge $x'_j$ and $x''_{j+1}$ into $x^0_{a_j}$ where $j\in [1,l]$, $x''_{l+1} = x''_1$.

We now have a new graph $G_{m,n}$ and the following theorem.

\begin{theorem}\label{th-SM-m} Let $G_{m,n}$ be defined as above, then $\chi_{la}(G_{m,n}) = 3$. 
\end{theorem}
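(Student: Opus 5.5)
Both inequalities need to be shown. For the upper bound we reuse the labeling $f$ from the proof of Theorem~\ref{th-Gmn}, carried over to $G_{m,n}$. Splitting and merging vertices only redistributes incidences, so $G_{m,n}$ has exactly the same edge set as $G(m^{[n]})$. Hence $f$ is still a bijection onto $\{1,\dots,(2m+1)n\}$. The vertices $v_{i,j}$ and the unsplit vertices $u_i$ keep their incident edges, and therefore keep their sums. These are $2mn+2n+1$ for every $v_{i,j}$, $S:=2nm^2+2mn+m+n$ for odd-indexed $u_i$, and $S+2$ for even-indexed $u_i$.

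The main computation concerns a merged vertex $x^0_{a_j}$. It is formed from $x'_j$, which carries the ``right half'' of $u_{a_j}$: the edges to $V_{a_j}$ and the edge $e_{a_j}$. It also contains $x''_{j+1}$, which carries the ``left half'' of $u_{a_{j+1}}$: the edges to $V_{a_{j+1}-1}$ and $e_{a_{j+1}-1}$. Its degree is therefore $2m+2$, the same as every other $u$-vertex.

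From items (3)--(5) of the proof of Theorem~\ref{th-Gmn}, the right-half edge labels at $u_i$ for odd $i$ are $n+i+n(2j-2)$ and $(2m+1)n-n(2j-1)-(i-1)$. The left-half labels at $u_{i}$, which come from $V_{i-1}$, are $(2m+1)n-n(2j-2)-(i-2)$ and $n+(i-1)+n(2j-1)$. The non-constant part of the right half is $\tfrac{m+1}{2}i-\tfrac{m-1}{2}(i-1)$, and that of the left half is $-\tfrac{m+1}{2}(i-2)+\tfrac{m-1}{2}(i-1)$. So the right-half sum is $R+i$ and the left-half sum is $L-i$ for constants $R,L$, where the cycle-edge contributions $f(e_i)=n-i+1$ and $f(e_{i-1})=i-1$ are included. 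Since $f^+(u_i)=S$ for every odd $i\ge3$, we get $R+L=S$.

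For the merged vertex this gives $f^+(x^0_{a_j})=S+a_j-a_{j+1}$. This differs from $S$ whenever $a_j\neq a_{j+1}$, so the naive transfer fails, and this is the main obstacle. I would repair it by relabeling. Among the odd $u$'s, the only place the indices enter is through the edges $e_{2i-1}$, $e_{2i}$ and the $i$-dependent offsets, so I would permute labels within the classes $\{f(e_{2i-1})\}$ and $\{f(u_iv_{i,\cdot})\}$ along blocks of the cycle. Concretely, I would re-index the construction of Theorem~\ref{th-Gmn} so that the labeling is defined relative to the new cyclic order of the half-vertices, taking $x^0_{a_j}$ as a single odd vertex of a relabeled $G(m^{[n]})$.

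The point is that $G_{m,n}$ is locally isomorphic to $G(m^{[n]})$ with the odd-indexed $u$'s permuted. Each $x^0_{a_j}$ is adjacent to $V_{a_j}$, $V_{a_{j+1}-1}$ and two even $u$'s, so $G_{m,n}$ is itself a graph of the form $G(m^{[n]})$ with a different pairing of cycle edges. I would verify this by tracing the cycle $e_1,\dots,e_n$ and the bipartite blocks. The required condition is $i\not\equiv j \pmod 2$ for the added edges, and this is preserved because only odd vertices are rearranged. If the verification succeeds, Theorem~\ref{th-Gmn} applies directly and gives a 3-coloring. The hypothesis $4\le|a_j-a_{j+1}|\le n-4$ should guarantee that no multiple edges arise in the merge, keeping the graph simple, which is what this reduction needs.

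For the lower bound, the same parity argument as in Lemma~\ref{lem-Gmn-tri} applies. The graph contains a closed walk $\cdots u\,v\,u'\cdots$ alternating through the $v$-vertices that visits every $u$-type vertex, so in a 2-coloring all $u$-type vertices would receive the same color. The cycle edges $e_i$ still join $u$-type vertices, which is a contradiction. Hence $\chi_{la}(G_{m,n})\ge\chi(G_{m,n})\ge3$, which together with the upper bound gives $\chi_{la}(G_{m,n})=3$.
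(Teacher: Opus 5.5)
\textbf{The upper bound.} Your upper bound rests on an arithmetic slip, and the detour it triggers does not work. Once the cycle-edge labels are actually included, the index dependence of each half cancels.

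For odd $p$, the bipartite edges in the right half of $u_p$ have net index dependence $+p$, as you found. But $f(e_p)=n-p+1$ contributes $-p$. For odd $q$, the bipartite edges in the left half of $u_q$ have net dependence $-q$, while $f(e_{q-1})=q-2$ contributes $+q$. (It is $q-2$, not $q-1$, because $e_{q-1}$ is even-indexed.) The two halves of $u_1$ fit the same formulas with $p=1$ and $q=n+1$.

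Hence every right half sums to a single constant $R$, every left half to a single constant $L$, and $R+L=S$. So $f^+(x^0_{a_j})=S$, not $S+a_j-a_{j+1}$. For example, with $m=1$, $n=8$, the right halves of $u_1$ and $u_5$ give $9+8=13+4=17$, the left halves give $17+7=21+3=24$, and $17+24=41=S$.

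So $f$ transfers to $G_{m,n}$ unchanged. Each $x^0_{a_j}$ is adjacent only to $v$-vertices and even-indexed $u$'s, so $f$ is already a local antimagic $3$-coloring. This direct transfer is exactly the paper's proof; the obstacle you identify is not there.

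\textbf{The proposed repair.} The repair you propose instead is false in general. Splitting $u_{a_j}$ cuts the chain $u_1V_1u_2V_2\cdots u_nV_nu_1$, and merging $x_j'$ with $x_{j+1}''$ closes each arc from $a_j$ to $a_{j+1}$ on its own. Consequently (always when $l=2$, and whenever $a_1<\cdots<a_l$), the edges at the degree-$2$ vertices form several disjoint copies of $B(m^{[k]})$ rather than one $B(m^{[n]})$. Thus $G_{m,n}\not\cong G(m^{[n]})$. Indeed, the paper's example built from $G(3^{[10]})$ is disconnected, with components $G(3^{[4]})$ and $G(3^{[6]})$. Theorem~\ref{th-Gmn} therefore cannot be invoked. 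Applying it to each component separately would not give a single bijection onto $\{1,\dots,(2m+1)n\}$ with three colors overall.

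\textbf{The lower bound.} The same cut breaks your lower bound: no alternating $u$--$v$ closed walk passes through all $u$-type vertices. The parity argument has to be run inside one piece, using a cycle edge with both ends in that piece. This works when the added cycle is $u_1u_2\cdots u_nu_1$, since each piece is then a copy of $G(m^{[k]})$. It fails for some admissible added cycles.

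Take $n=8$, added cycle $u_1u_6u_3u_8u_5u_2u_7u_4u_1$, and split vertices $u_1,u_5$ (both lie in $SM$ and satisfy the distance condition). Every added edge then joins the two pieces, and $G_{m,8}$ is connected and bipartite. Here a different argument is needed. For instance, both sides of the bipartition have $4+4m$ vertices, and each side's vertex sums add up to the total label sum. So a local antimagic $2$-coloring would give both sides the same color, which is a contradiction.
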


\begin{proof}
	In this way of splitting and merging, $G_{m,n}$ has no multiple edges nor loops. Moreover, every vertex $x_{a_j}^0$ with odd $a_j$ in $[1,n]$, $1\le j\le l$, is adjacent to two 4-degree vertices with even subscripts and two 2-degree vertices.
	
	$V(G_{m,n})$ admits a 3-independent partition 
	\begin{equation*}
		U_1=\left\{u_i \mid i \text{ is odd}, 1\le i\le n, i\ne a_j, 1\le j\le l\right\}\bigcup \left\{x_{a_j}^0\mid 1\le j\le l\right\}, 
	\end{equation*}
	\begin{equation*}
		U_2=\left\{u_i \mid i \text{ is even}, 1\le i\le n\right\},
	\end{equation*}
	\begin{equation*}
		V=\left\{v_{i,j} \mid 1\le i\le n, 1\le j\le m\right\}
	\end{equation*}
which implies that $\chi(G_{m,n})\le 3$. Clearly $\chi(G_{m,n})\ge 2$. Suppose $\chi(G_{m,n}) = 2$, which implies that $V(G_{m,n})$ can be divided into two independent partition $V_1$ and $V_2$. Without loss of generality, every 4-degree vertex with odd subscript must be in $V_1$ and those with even subscript must be
	in $V_2$. Therefore, every 2-degree vertex must be in $V_1$ or in $V_2$, contradicting the fact that every 2-degree vertex must be adjacent to a vertex in $V_1$ and a vertex in $V_2$. Therefore, $\chi(G_{m,n}) = 3$
	which implies $\chi_{la}(G_{m,n}) \ge 3$.
	
	The labeling $f$ for $G(m^{[n]})$ in Theorem~\ref{th-Gmn} admits an edge labeling $g$ of $G_{m,n}$ with 
	\[g^+(v)=f^+(v)\text{~for~} \forall v\in V(G_{m,n})\setminus \left\{x_{a_j}^0\mid 1\le j\le l\right\},\]
	\[g^+(x_{a_j}^0)=2nm^2+2mn+m+n\text{~for~} 1\le j\le l,\]
	due to 
    \begin{equation*}
    	\begin{aligned}
    		&\sum_{j=1}^{\frac{m+1}{2}}[f(u_pv_{p,2j-1})+f(u_qv_{q-1,2j-1})]+\sum_{j=1}^{\frac{m-1}{2}}[f(u_pv_{p,2j})+f(u_qv_{q-1,2j})]\\&+f(e_p)+f(e_{q-1})\\
    		&= [n+p+(2m+1)n-q+2]+\sum_{j=1}^{\frac{m-1}{2}}[4(m+1)n+2]+n-(p-1)+q-2\\
    		&= (2m+2)n+2+(p-q)+2(m^2-1)n+m-1+n-1-(p-q)\\
    		&= 2nm^2+2nm+m+n, \text{ for any }p, q\in [1,n].
    	\end{aligned}
    \end{equation*}
Since
\begin{equation*}
	\begin{aligned}
		f^+(v_{i,j})&=2nm+2n+1\text{~for~} 1\le i\le n\text{~and~} 1\le j\le m,\\
		f^+(u_{2i-1})&=2nm^2+2mn+m+n\text{~for~}1\le i\le n,\\
		f^+(u_{2i})&=2nm^2+2mn+m+n+2\text{~for~} 1\le i\le n,
	\end{aligned}
\end{equation*}
$g$ is a local antimagic $3$-labeling of $G_{m,n}$ which implies $\chi_{la}(G_{m,n})\le 3$. This completes the proof.

\end{proof}

\begin{corollary}
	Let $R$ be a 2-regular graph with only even order components, then $\chi_{la}(M(R))=3$.
\end{corollary}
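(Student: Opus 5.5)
The plan is to realize $M(R)$ as one of the graphs $G_{m,n}$ with $m=1$ and then apply Theorem~\ref{th-SM-m}, or Theorem~\ref{th-Gmn} when $R$ is connected. Write $R=C_{n_1}+C_{n_2}+\cdots+C_{n_k}$, where each $n_t\ge 4$ is even; even cycles in a simple graph have length at least $4$. As noted in the introduction, $M(C_n)\cong G(1^{[n]})$ when the added cycle is $u_1u_2\cdots u_n$. Under this identification the $u_i$ are the edge-vertices, which have degree $4$, and $v_{i,1}$ is the original vertex of $C_n$ lying between the consecutive edges $u_i$ and $u_{i+1}$. So if $k=1$, the statement is Theorem~\ref{th-Gmn} with $m=1$.

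For $k\ge 2$, put $N=n_1+\cdots+n_k$, which is even with $N\ge 8$, and start from $G(1^{[N]})$ with the added cycle $u_1u_2\cdots u_N$. Every odd-indexed $u_i$ is incident to $e_{i-1}$ and $e_i$, so all odd-indexed $u_i$ lie in $SM$. Choose $l=k$ cut vertices with
\[
a_1=1,\qquad a_{j+1}=a_j+n_j \quad (1\le j\le k-1).
\]
Each $a_j$ is odd because every $n_j$ is even. Consecutive cut indices differ by $n_j\ge 4$, and cyclically each difference is at most $N-4$ because the other blocks contribute at least $4$. Hence the spacing condition $4\le |a_j-a_{j+1}|\le N-4$ used in defining $G_{m,n}$ holds; I read it cyclically, which is what the construction needs.

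Next I will check that the split-and-merge operation produces exactly $M(R)$. After splitting, $x_j'$ starts the segment $u_{a_j}v_{a_j}u_{a_j+1}\cdots$, and $x_{j+1}''$ ends that segment, being adjacent to $v_{a_{j+1}-1}$ and incident to $e_{a_{j+1}-1}$. Merging $x_j'$ with $x_{j+1}''$ therefore closes the block of indices $a_j,\dots,a_{j+1}-1$ into a copy of $G(1^{[n_j]})\cong M(C_{n_j})$. Carrying this out for all $j$ cyclically gives the disjoint union of the $M(C_{n_j})$, which is $M(R)$. Theorem~\ref{th-SM-m} then supplies an edge labeling $g$ with exactly three induced vertex sums, namely $2N+2N+1$ on original vertices and the two values $2N+2N+1+N$ and $2N+2N+1+N+2$ (with $m=1$) on the edge-vertices. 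Its proof computes the merged vertex sums for arbitrary $p,q$, so the argument does not depend on the graph staying connected. Adjacent vertices lie in different classes of the $3$-partition, so $g$ is a local antimagic labeling and $\chi_{la}(M(R))\le 3$.

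For the lower bound, each component $M(C_{n_j})$ contains the triangle $u_iu_{i+1}v_i$, which gives $\chi(M(R))\ge 3$, and $M(R)$ has no $K_2$ component, so $\chi_{la}(M(R))\ge 3$. The main point to verify carefully is the bookkeeping in the merge step. I need to confirm that merging $x_j'$ with $x_{j+1}''$, rather than with $x_j''$, closes each block into its own cycle instead of permuting blocks into a single larger graph. I also need to confirm that the theorem's formula for $g^+(x^0_{a_j})$ applies to every merged vertex, including the one formed from $u_1$.
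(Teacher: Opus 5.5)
Your proposal is correct and follows the paper's argument: $M(R)$ is $G(1^{[N]})$ when $R$ is connected and a $G_{1,N}$ otherwise, so Theorem~\ref{th-Gmn} or Theorem~\ref{th-SM-m} applies, and your explicit cut indices $a_1=1$, $a_{j+1}=a_j+n_j$, your spacing check, and your triangle lower bound fill in details the paper leaves implicit. The two items you flag at the end are already settled: your segment argument shows each block $a_j,\dots,a_{j+1}-1$ closes into its own $G(1^{[n_j]})$, and for the vertex $x^0_{a_k}$ (where $q=a_1=1$) the theorem's sum formula holds once $q$ is read as $N+1$, since then $f(u_1v_{N,1})=2N+1$ and $f(e_N)=N-1$.
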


\begin{proof} 
	If $m=1$ in Theorem~\ref{th-SM-m}, $G_{1,n}$ is obtained from $G(1^{[n]})$ such that $a_1< a_2<\cdots < a_l$ for split vertices $u_{a_1}, u_{a_2},\cdots, u_{a_l}$ while the induced $n$-cycle is $C_n=u_1u_2\ldots u_nu_1$ in $G(1^{[n]})$. Now $G(1^{[n]})$ is a middle graph of  $C_n$. This implies that $G_{1,n}$ is disconnected with $p\ge 2$ components such that each component is a middle graph of a cycle of even order at least 4. If $R$ is connected, then $M(R)=G(1^{[n]})$. By Theorem~\ref{th-Gmn}, $\chi_{la}(M(R))=3$. If $R$ is  disconnected, then $M(R)=G_{1,n}$. By Theorem~\ref{th-SM-m}, $\chi_{la}(M(R))=3$. 
\end{proof} 

\begin{example}
	The graph in  Figure~\ref{G_3,10} is obtained from $G(3^{[10]})$ by split and merge as described above where $SM=\left\{u_1, u_5, u_9\right\}$ and chosen vertices for split are $u_1$ and $u_5$. We get a disconnected graph in family $G_{3,10}$ that contains two components $G(3^{[4]})$ and $G(3^{[6]})$.

    \begin{figure*}[ht]
        \centering
        \begin{minipage}{0.55\textwidth}
            \centering
            \includegraphics[width=\textwidth]{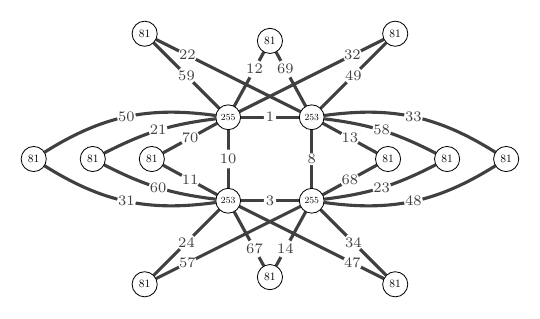}
        \end{minipage}
        \begin{minipage}{0.4\textwidth}
            \centering
            \includegraphics[width=\textwidth]{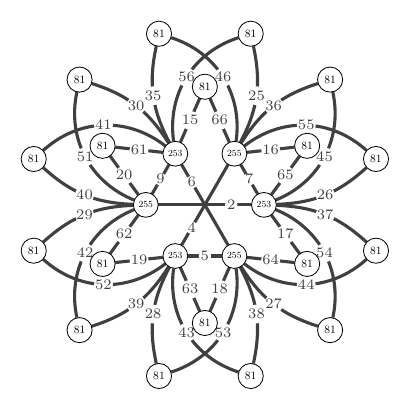}
        \end{minipage}
        \caption{A graph from $G_{3,10}$ with local antimagic 3-coloring}
        \label{G_3,10}
    \end{figure*}

    
\end{example}

After split and merge in Theorem~\ref{th-SM-m}, connected or disconnected graph can be attained. Assume the graph obtained $G_{m,n}$ is disconnected with components $G_1$ and $G_2$ where $m$ is odd ($m\ge 3$) and $n$ is even ($n\ge 4$). Let $U_c^1 = \{u_{2i}\mid u_{2i}\in V(G_1),1\le i\le \delta\}$ and $U_c^2 = \{u_{2j} \mid u_{2j} \in V(G_2), 1\le j\le \delta\}$. Now we continue to consider bijection $g$ in Theorem~\ref{th-SM-m}. Observe that under the bijection $g$, every vertex in $U_c^1$ or in $U_c^2$ is incident to two edges with labels sum $2nm^2+2nm+m+1$ and another two edges with labels sum $n+1$.

Choose and split a vertex $u_{2i}\in U_c^1$ into $y^1_{2i}$ and $y^2_{2i}$ such that $y^1_{2i}$ is incident to the two edges with labels sum $2nm^2+2nm+m+1$ and $y^2_{2i}$ is incident to the two edges with labels sum $n+1$. Similarly, choose and split a vertex $u_{2j}\in U_c^2$ into $y^1_{2j}$ and $y^2_{2j}$. And merge $y^1_{2i}$ and $y^2_{2j}$ into $y_{2j}^0$, $y^2_{2i}$ and $y^1_{2j}$ into $y_{2i}^0$, then new graph is obtained.

We may repeat the same process if there are still vertex not chosen in both of $U^1_c$ and $U^2_c$, and the graph obtained is not isomorphic to all previously obtained graph(s). 

\begin{theorem}
	The graph $H_1$ obtained as described above has local antimagic chromatic number 3.
\end{theorem}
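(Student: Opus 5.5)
The proof has two parts: a lower bound from the chromatic number, and an upper bound obtained by transporting the labeling $g$ of Theorem~\ref{th-SM-m} to $H_1$ unchanged.

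\textbf{Upper bound.} Every edge of $G_{m,n}$ is kept in $H_1$; only its endpoint among the split vertices is reassigned. So $g$ is still a bijection $E(H_1)\to\{1,\ldots,(2m+1)n\}$, and we take it as the labeling of $H_1$. Vertices not touched by the split–merge operations keep their neighbourhood edge sets, hence their induced labels from Theorem~\ref{th-SM-m}:
\begin{itemize}
\item $2nm+2n+1$ on $V$;
\item $2nm^2+2nm+m+n$ on the odd-type vertices, including the $x^0_{a_j}$;
\item $2nm^2+2nm+m+n+2$ on the even-type vertices $u_{2i}$.
\end{itemize}
For a merged vertex such as $y^0_{2j}=y^1_{2i}\cup y^2_{2j}$, its edges are the two edges formerly at $u_{2i}$ with label sum $2nm^2+2nm+m+1$, plus the two edges formerly at $u_{2j}$ with label sum $n+1$. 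So
\[
g^+(y^0_{2j})=2nm^2+2nm+m+n+2,
\]
the same value as an unsplit $u_{2i}$. The same holds for $y^0_{2i}$.

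The first check here is the observation stated before the theorem: at each $u_{2i}$ the two cycle edges sum to $n+1$ and the $2m$ spoke edges sum to $2nm^2+2nm+m+1$. This follows from the computation of $f^+(u_{2i})$ in Theorem~\ref{th-Gmn}, since $(m+1)(mn+n+1)+(m-1)(mn+n)$ simplifies to that value. Repeating the operation preserves the same invariants, so by induction on the number of swaps every $y^0$ vertex carries the even-type label.

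Next I check that the labeling is proper and that $H_1$ is simple. The neighbours of $y^0_{2j}$ are:
\begin{itemize}
\item spoke neighbours, which lie in $V$ with label $2nm+2n+1$;
\item cycle neighbours, which are odd-type vertices ($u_{2j\pm1}$ or some $x^0$) with label $2nm^2+2nm+m+n$.
\end{itemize}
These three values are pairwise distinct for $m\ge1$, $n\ge4$. Since $G_1$ and $G_2$ are distinct components, the two merged halves have disjoint neighbourhoods, so no multiple edges or loops arise. Hence $g$ is a local antimagic $3$-coloring and $\chi_{la}(H_1)\le3$.

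\textbf{Lower bound.} I will show $\chi(H_1)\ge3$, which gives $\chi_{la}(H_1)\ge\chi(H_1)\ge3$ as in Lemma~\ref{lem-Gmn-tri}. The natural tripartition ($V$, odd-type, even-type) remains independent, so $\chi(H_1)\le3$.

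For $\chi\ge3$ I need an odd cycle. The cycle edges among the $u$'s (the $e_k$) are untouched except at merged vertices, where a merged vertex keeps exactly two cycle edges, both from the original $u_{2j}$. So in $H_1$ the cycle-edge subgraph is still a union of cycles, since every vertex has exactly two cycle edges. Take such a cycle $C$ passing through a vertex $w$, together with a $2$-path $w\,v\,w'$ through a degree-$2$ vertex $v\in V$ with $w,w'$ consecutive on $C$. The path $w\,v\,w'$ plus the edge $ww'$ forms a triangle, so $\chi(H_1)\ge3$.

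The main obstacle is confirming that such a triangle survives the split–merge, i.e. that the two $u$-neighbours $w,w'$ of some $v_{i,k}$ are still joined by a cycle edge. The cleanest choice is $v$ adjacent to an unsplit pair $u_i,u_{i+1}$. If every vertex has been involved, I instead argue as in Theorem~\ref{th-SM-m}: in a $2$-coloring, each degree-$2$ vertex in $V$ forces its two $u$-neighbours into the same class. Following consecutive $u$'s around a cycle via the spokes then forces all of them into one class, which contradicts the presence of the $e_k$ edges between them.
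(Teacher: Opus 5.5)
Your upper bound is correct and matches the paper: you keep $g$, each $y^0$ vertex collects $(2nm^2+2nm+m+1)+(n+1)$, and the three classes carry the distinct labels $2nm+2n+1$, $2nm^2+2nm+m+n$ and $2nm^2+2nm+m+n+2$.

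The gap is the lower bound $\chi(H_1)\ge 3$. Your triangle assumes the two $u$-neighbours $u_i,u_{i+1}$ of some $v_{i,k}$ are joined by an added edge. The definition of $G(m^{[n]})$ does not guarantee this, since the added cycle only has to alternate in parity. More seriously, your fallback assumes some $e_k$ joins two vertices that the spokes force into one class, and this can fail. When $|U_c^1|=|U_c^2|$, the construction lets you keep swapping until every vertex of $U_c^1$ and $U_c^2$ is used. Here is an example.
\begin{itemize}
\item Take $n=8$ with added cycle $u_1u_2\cdots u_8$, and split $u_1,u_5$, so $G_{m,8}$ is two copies of $G(m^{[4]})$.
\item Swap $(u_2,u_6)$, then $(u_4,u_8)$. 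The result is a legitimate new graph, since the earlier graphs contain the triangle $u_3v_{3,1}u_4$.
\item Put $O_1=\{x^0_1,u_3\}$, $O_2=\{x^0_5,u_7\}$, $Y_1=\{y^0_2,y^0_4\}$, $Y_2=\{y^0_6,y^0_8\}$, $W_1=\{v_{i,k}: i\le 4\}$ and $W_2=\{v_{i,k}: i\ge 5\}$.
\item The cycle edges join $O_1$ to $Y_1$ and $O_2$ to $Y_2$. Each vertex of $W_1$ is adjacent to one vertex of $O_1$ and one of $Y_2$, and each vertex of $W_2$ to one of $O_2$ and one of $Y_1$.
\end{itemize}
So $(O_1\cup Y_2\cup W_2,\; O_2\cup Y_1\cup W_1)$ is a bipartition and $\chi(H_1)=2$. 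Neither your triangle nor your forcing argument exists here, and no proof through the chromatic number can work. The paper's proof has the same defect: it only asserts $\chi(H_1)=3$ ``using the same argument as in Theorem~\ref{th-SM-m}''.

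The conclusion still holds, but the lower bound needs a different argument in this case.
\begin{itemize}
\item If some even vertex $u_{2i}$ is never swapped (with added cycle $u_1\cdots u_n$), your triangle $u_{2i}v_{2i,k}u_{2i+1}$ exists, with the relevant $x^0$ in place of $u_{2i+1}$ if that vertex was split.
\item If every even vertex is swapped, $H_1$ is connected and bipartite with both parts of size $(m+1)n/2$. A local antimagic $2$-coloring would then be constant on each part.
\item Every edge has exactly one end in each part, so that constant equals $q(q+1)/((m+1)n)=(2m+1)((2m+1)n+1)/(m+1)$, where $q=(2m+1)n$.
\item Since $\gcd(2m+1,m+1)=1$, this requires $m+1\mid n-1$. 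That is impossible, because $m+1$ is even and $n-1$ is odd.
\end{itemize}
Hence $\chi_{la}(H_1)\ge 3$ even though $\chi(H_1)=2$.
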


\begin{proof}
	By the proof of Theorem~\ref{th-SM-m}, $V(G_{m,n})$ admits a 3-independent partition, $U_1$, $U_2$ and $V$. Since $H_1$ is obtained from $G_{m,n}$ by splitting certain vertices only in $U_2$ and merging them, $V(H_1)$ still admits a 3-independent partition. Thus $\chi(H_1)\le 3$. Using the same argument as in Theorem~\ref{th-SM-m}, $\chi(H_1) = 3$ which indicates $\chi_{la}(H_1)\ge 3$. 
	
	Keeping all the edge labels, we see that $H_1$ admits a local antimagic labeling $h_1$ with
	\[h_1^+(y_{2i}^0)=h^+(y_{2j}^0)=(2nm^2+2nm+m+1)+(n+1)=2nm^2+2nm+m+n+2\]
	\[h_1^+(v)=g^+(v)\text{~for~} \forall v\in V(H)\setminus \left\{y_{2i}^0,y_{2j}^0\right\}\]
	
	Clearly, $h_1$ is a local antimagic 3-coloring of $H_1$, thus $\chi_{la}(H_1)\le 3$ which completes the proof.
\end{proof}

\begin{remark}
	If the graph obtained in Theorem~\ref{th-SM-m} has $r\ge 2$ components, the split and merge process above can reduce the number of components by at most 1 in each split-merge step. We may apply this split-merge process repeatedly to certain degree 4 vertices with odd subscripts to obtain non-isomorphic graph(s) too.
\end{remark}

Note that $G(m^{[n]})$ in Theorem~\ref{th-Gmn} and $G_{m,n}$ in Theorem~\ref{th-SM-m} admit local antimagic 3-colorings with induced vertex labels $a = 2nm+2n+1, b = 2nm^2+2nm+m+n$ and $c = 2nm^2+2nm+m+n+2$. We shall partition all of the vertices into three parts according to the induced vertex labels $a,b,c$ respectively. Certain graphs with local chromatic number 3 can be obtained by merging vertices in the same part. For $1\le j\le m$, let $V_{1,j} = \left\{v_{2i-1,j}\in V(G(m^{[n]}))\mid 1\le i\le \delta\right\}$ and $V_{2,j} = \left\{v_{2i,j}\in V(G(m^{[n]})\mid 1\le i\le \delta\right\}$.

\begin{theorem}\label{th-merge-v}
	If $n = 2rs$ for $r\ge 1, s\ge 2$, partition $V_{1,j}$ into $r$ blocks of size $s$ denoted by $V_{1,j}^k$ while partition $V_{2,j}$ into $V_{2,j}^k$. Let graph $H_2$ be obtained from $G(m^{[n]})$ by merging all vertices in $V_{1,j}^k$ into $v_{1,j}^k$ and all vertices in $V_{2,j}^k$ into $v_{2,j}^k$ individually where $1\le j\le m$ and $1\le k\le r$. Then $\chi_{la}(H_2)=3$.
\end{theorem}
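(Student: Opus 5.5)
The plan is to prove matching lower and upper bounds. For the upper bound we reuse the labeling $f$ of Theorem~\ref{th-Gmn}. For the lower bound we exhibit an odd cycle in $H_2$ and argue that $H_2$ has no loops or multiple edges, so the merge is a genuine simple graph on the same edge set.

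\textbf{Upper bound.} Merging vertices keeps every edge, so $f$ transfers directly to a bijection $h: E(H_2)\to\{1,\dots,(2m+1)n\}$. The vertices $u_i$ are untouched, so their induced labels stay $b=2nm^2+2nm+m+n$ and $c=b+2$. A merged vertex $v_{1,j}^k$ collects the edges of $s$ vertices $v_{2i-1,j}$, each of which had induced label $a=2nm+2n+1$ under $f$. Hence $h^+(v_{1,j}^k)=sa$, and likewise $h^+(v_{2,j}^k)=sa$. The merged vertices are all pairwise nonadjacent, because the $v$'s formed an independent set. Each is adjacent only to $u$'s, so it suffices to check $sa\ne b$ and $sa\ne b+2$. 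Since $a$ and $s$ are integers, this amounts to checking that $b$ and $b+2$ are not equal to $s(2nm+2n+1)$. If needed, we compare residues modulo $2nm+2n+1$ or argue by size in terms of $m$ and $s$; this is the fiddly step. If a coincidence can occur for some $(m,n,s)$, the first repair to try is the complementary labeling $g=q+1-f$ from Lemma~\ref{lem-q+1-f}, applied before merging. Then $g^+(v_{i,j})=2(q+1)-a$, which shifts the merged labels to $s(2q+2-a)$ and breaks the coincidence. The resulting labeling uses exactly three colors $\{sa,b,b+2\}$, so $\chi_{la}(H_2)\le 3$.

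\textbf{Simplicity of $H_2$.} The vertices merged together are $v_{2i-1,j}$ for $i$ in a block of size $s$. These are adjacent to the pairs $\{u_{2i-1},u_{2i}\}$, and distinct odd indices give disjoint pairs, so no multiple edges arise. The same holds for the even class, whose pairs are $\{u_{2i},u_{2i+1}\}$. No loops arise either, since the merged vertices are pairwise nonadjacent.

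\textbf{Lower bound.} The $u$'s still induce the $n$-cycle $e_1\cdots e_n$ in $H_2$. The $u_i$ are untouched, and $u_1,u_2$ remain adjacent to a common vertex $v_{1,1}^k$ in $H_2$. Thus $u_1u_2v_{1,1}^k$ is a triangle, and $\chi(H_2)\ge 3$. This cleanly replaces the parity argument of Lemma~\ref{lem-Gmn-tri} and gives $\chi_{la}(H_2)\ge\chi(H_2)\ge 3$. Combined with the upper bound, $\chi_{la}(H_2)=3$.

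The main obstacle I expect is the upper bound's check that $s\cdot a$ differs from $b$ and $b+2$.
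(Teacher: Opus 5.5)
Your upper bound is the paper's own argument. You transfer the labeling $f$ of Theorem~\ref{th-Gmn}, keep $b=2nm^2+2nm+m+n$ and $b+2$ on the $u_i$, and give each merged vertex $sa$ with $a=2nm+2n+1$. The step you leave open closes in one line. Indeed $b=ma+n$ and $b+2=ma+n+2$, with $0<n<n+2<4n+1\le a$. So neither $b$ nor $b+2$ is a multiple of $a$, and $sa\notin\{b,b+2\}$ for every $s$. No fallback to $q+1-f$ is needed. That fallback is also unjustified as you state it, because it moves the $u$-labels too.

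The genuine gap is in your lower bound, and it comes from replacing the paper's parity argument. Your triangle $u_1u_2v_{1,1}^k$ needs $u_1u_2$ to be one of the added edges. The definition of $G(m^{[n]})$ only asks that the $n$ added edges form a cycle alternating between odd- and even-indexed $u$'s, not that this cycle be $u_1u_2\cdots u_n$. For example, with $n=8$ the cycle $u_1u_4u_7u_2u_5u_8u_3u_6u_1$ contains no edge of the form $u_iu_{i+1}$. There $u_1u_2$ is not an edge, $G(m^{[8]})$ is triangle-free, and your triangle does not exist.

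What works for every admissible cycle is the argument of Lemma~\ref{lem-Gmn-tri}, which the paper invokes. The closed walk $u_1v_{1,j}u_2v_{2,j}\cdots u_nv_{n,j}u_1$ survives the merging, with each $v_{i,j}$ replaced by the merged vertex containing it. Hence any proper $2$-coloring of $H_2$ gives all $u_i$ the same color, which contradicts the added edges. Equivalently, any added edge $u_xu_y$ with $x<y$ closes the walk $u_xv_{x,1}u_{x+1}\cdots v_{y-1,1}u_y$ of length $2(y-x)$ into an odd closed walk. With this replacement, or with an explicit restriction to the cycle $u_1u_2\cdots u_n$, your proof goes through.
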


\begin{proof}
	Since all vertices in the same blocks have no common neighbors, $H_2$ is simple and loopless.
	
	After merging, edges that are originally adjacent in $G(m^{[n]})$ are still adjacent in $H$. By an argument similar to that in Theorem~\ref{th-SM-m}, we also conclude that $\chi_{la}(H_2) \ge \chi(H) = 3$. By Lemma~\ref{lem-Gmn-tri}, $V(G(m^{[n]}))$ can be partitioned into $U_1$, $U_2$ and $V$. Since vertices we merge are in $V$, vertices in $U_1$ and $U_2$ are kept.
	
	Define $f: E(G(m^{[n]}))\to \left\{1,2,\cdots,3n\right\}$ as shown in Theorem~\ref{th-Gmn} which implies a bijective edge labeling $h_2$ for $H_2$ with 
	\[h_2^+(x) = 2nm^2+2nm+m+n\text{~for~}x\in U_1,\]
	\[h_2^+(y) = 2nm^2+2nm+m+n+2\text{~for~} y\in U_2,\]
	\[h_2^+(v_{i,j}^k) = s(2nm+2n+1)\text{~for~} i=1, 2, 1\le j\le m\text{~and~} 1\le k\le r.\]
	Clearly, $h_2$ is a local antimagic 3-coloring of $H_2$. Therefore $\chi_{la}(H_2)=3$. 
\end{proof}

\begin{remark}
	We also can attain graphs with chromatic number 3 through similar method for $G_{m,n}$.
\end{remark}

\begin{corollary}\label{cor-C(2n,n)-del}
	The graph $H$ such that every component is the join of an even cycle with $O_2$ has local antimagic chromatic number $3$.
\end{corollary}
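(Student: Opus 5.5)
The plan is to obtain $H$ as a special case of Theorem~\ref{th-merge-v} with $m=1$, applied either to $G(1^{[n]})$ or to a disconnected graph $G_{1,n}$ from Theorem~\ref{th-SM-m}. The lower bound is immediate. Each component $C_{2k}\vee O_2$ contains a triangle, formed by an edge of $C_{2k}$ together with one vertex of $O_2$. Hence $\chi_{la}(H)\ge\chi(H)\ge 3$. So the work is entirely in the upper bound, namely exhibiting a local antimagic $3$-coloring.

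\textbf{Connected case.} Suppose $H=C_n\vee O_2$ with $n$ even and $n\ge 4$. Take $m=1$, $r=1$, $s=n/2\ge 2$ in Theorem~\ref{th-merge-v}, so that $V_{1,1}=\{v_{1,1},v_{3,1},\dots,v_{n-1,1}\}$ and $V_{2,1}=\{v_{2,1},v_{4,1},\dots,v_{n,1}\}$ are each merged into a single vertex. Since $v_{i,1}$ is adjacent to $u_i$ and $u_{i+1}$, the vertex merged from $V_{1,1}$ is adjacent to every $u_i$ and is adjacent to nothing else. The same holds for the vertex merged from $V_{2,1}$. The two merged vertices are not adjacent to each other. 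The $u_i$ induce the $n$-cycle formed by the edges $e_1,\dots,e_n$. Therefore $H_2\cong C_n\vee O_2$, and Theorem~\ref{th-merge-v} gives $\chi_{la}(H)=3$. The induced labels are $3n+1$, $3n+3$ and $\frac n2(4n+1)$, which are pairwise distinct.

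\textbf{Disconnected case.} Here I would first use the construction before Theorem~\ref{th-SM-m} with $m=1$. As in the corollary on $M(R)$, this produces $G_{1,N}$ whose components are $G(1^{[n_t]})$, each $n_t$ even and at least $4$. The labeling $g$ of Theorem~\ref{th-SM-m} has vertex sums $a=4N+1$ on every $v$-vertex, $b=3N+1$ on $U_1$, and $c=3N+3$ on $U_2$. Inside each component I would merge its odd-indexed $v$-vertices into one vertex and its even-indexed $v$-vertices into another. This is exactly the merge of Theorem~\ref{th-merge-v} carried out componentwise, and it turns each component into $C_{n_t}\vee O_2$. Keeping all edge labels, the $u$-vertices keep their sums $b$ and $c$. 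A merged vertex in the component with cycle length $n_t$ receives the sum $\frac{n_t}{2}(4N+1)$.

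\textbf{Main obstacle.} For the result to be a $3$-coloring, all merged vertices must receive the same sum. This forces all components to have cycles of the same length $n_t=2k$; the merged sum is then $k(4N+1)$, which is distinct from $b$ and $c$. So the argument as planned proves the corollary when all components are isomorphic. This is also what the block structure of Theorem~\ref{th-merge-v} with $r\ge 2$ blocks of common size $s$ naturally produces. The step I expect to be hardest is components with unequal cycle lengths. There I would first try relabeling each component separately with consecutive label ranges and applying Lemma~\ref{lem-q+1-f} to complementary components, aiming to equalize the merged sums. I expect this to be the point where the statement either needs the isomorphic-components restriction or a genuinely new labeling.
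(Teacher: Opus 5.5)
Your connected case is exactly the paper's proof. The paper takes $m=1$, $r=1$ (so $s=n/2$) in Theorem~\ref{th-merge-v} and stops there. That choice always gives a connected graph, because the $u_i$ still form a single $n$-cycle. The same holds for $r\ge 2$, so Theorem~\ref{th-merge-v} by itself never produces a disjoint union of joins; the equal-block-size condition there is only an analogy, despite your passing remark.

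So the paper's argument establishes the corollary only for connected $H$. Your route through $G_{1,N}$ followed by a componentwise merge is a genuine extension. It correctly gives a local antimagic $3$-coloring whenever all components are isomorphic. The obstruction you identify for unequal cycle lengths is real: the merged sums $\tfrac{n_t}{2}(4N+1)$ depend on $n_t$. Nothing in the paper resolves it. As literally worded for arbitrary components, the statement is therefore not proved by either argument. You should state what you actually prove: connected $H$, or $H$ with pairwise isomorphic components.

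One arithmetic correction. For $m=1$ the sums on $U_1$ and $U_2$ are $2nm^2+2nm+m+n=5n+1$ and $5n+3$, not $3n+1$ and $3n+3$. To check, the vertex sums must total $2\sum_{i=1}^{3n} i=9n^2+3n$. With the correct values, $n(4n+1)+\tfrac n2(5n+1)+\tfrac n2(5n+3)=9n^2+3n$, whereas your values give $7n^2+3n$. The same slip appears as $3N+1$ and $3N+3$ in your disconnected case. Nothing breaks, since $k(4N+1)\ge 8N+2>5N+3$ for $k\ge 2$, but the numbers should be fixed.
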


\begin{proof}
	Actually the graph $H$ is the case $r=1$ and $m=1$ in Theorem~\ref{th-merge-v}, which is obtained from $G(1^{[n]})$ by merging all vertices in $V_{i,1}$ into $v_{i,1}^0$ individually for $i=1, 2$, then $\chi_{la}(H)=3$.
\end{proof}

\section{Local antimagic chromatic number of $K_{1,s}\times C_n$ with even $n\ge 4$}

Now we consider the direct product $K_{1,s}\times C_n$. For $K_{1,s}$, let vertex set $V(K_{1,s})=\{v,v_1,v_2,\cdots$ $,v_s\}$, edge set $E(K_{1,s})=\left\{vv_1,vv_2,\cdots, vv_s \right\}$. For $C_n$, let $C_n=u_1u_2\cdots u_nu_1$ with even $n\ge 4$. Therefore vertex set of $K_{1,s}\times C_n$ is set $V(K_{1,s})\times V(C_n)=\bigcup_{i=1}^{n}\bigcup_{k=1}^{s}\left\{{(v,u_i),(v_k,u_i)}\right\}$. Additionally, two vertices $(x,y)$ and $(u,v)$ are adjacent in $K_{1,s}\times C_n$ if and only if $xu\in E(K_{1,s})$ and $yv\in E(C_n)$. Let $\delta = n/2$ as above.

\begin{figure*}[htbp]
	\centering
	\begin{minipage}[b]{0.2\textwidth} 
		\centering
		\includegraphics[width=\textwidth]{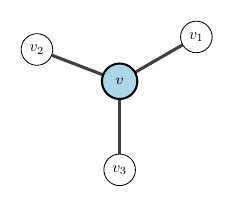}
		\caption{$K_{1,3}$}
		\label{fig: graph3}
	\end{minipage}
	\begin{minipage}[b]{0.15\textwidth} 
		\centering
		\includegraphics[width=\textwidth]{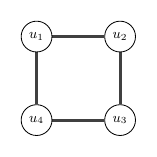}
		\caption{$C_4$}
		\label{fig: graph4}
	\end{minipage}		
\end{figure*}

Denote vertices $(v,u_{2j-1}), (v,u_{2j}), (v_k,u_{2j}), (v_k,u_{2j-1})$ in $K_{1,s}\times C_n$ by $u_j^1, u_j^2, v_{j,k}^1,$ $ v_{j,k}^2$ respectively, where $1\le j\le \delta$ and $k=1,2,\cdots,s$. Observe that $K_{1,s}\times C_n$ is two copies of $B(s^{[\delta]})$. For example, graph $K_{1,3}$ in Figure~\ref{fig: graph3}, $C_4$ in Figure~\ref{fig: graph4} and $K_{1,3}\times C_4$ in Figure~\ref{fig: graph5}.

\begin{figure*}[htbp]
	\centering
	\begin{minipage}[b]{0.5\textwidth} 
		\centering
		\includegraphics[width=\textwidth]{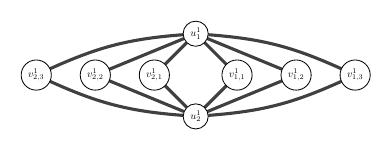}
	\end{minipage}\hfill 
	\begin{minipage}[b]{0.5\textwidth} 
		\centering
		\includegraphics[width=\textwidth]{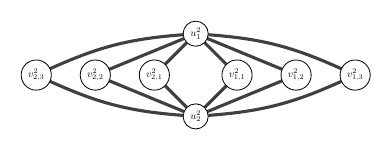}
	\end{minipage}\hfill 
    \caption{$K_{1,3}\times C_4$}
    \label{fig: graph5}

\end{figure*}

\begin{theorem}\label{th-K&C-even s}
	The graph $G$ with $p\ge 2$ components $B(s^{[n_i]})$ has $\chi_{la}(G)=2$ where $s$ is even, $1\le i\le p$, $\sum_{i=1}^{p}n_i=n$ and $n_i\ge 3$.
\end{theorem}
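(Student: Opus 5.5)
Since each $B(s^{[n_i]})$ is a nonempty bipartite graph with no $K_2$ component, we have $\chi_{la}(G)\ge \chi(G)=2$. So the whole task is to build a local antimagic labeling of $G$ with exactly two induced vertex labels. Lemma~\ref{lem-B} alone is not enough: labeling each component separately would use overlapping label ranges, and the two colors need not agree across components. The plan is instead a single labeling $f:E(G)\to\{1,\dots,q\}$, with $q=2sn$, whose vertex sums do not depend on which component a vertex lies in.

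\textbf{Pairing.} Index the degree-$2$ vertex blocks globally as $V_1,\dots,V_n$, listing the blocks of the first component, then the second, and so on. Each $V_t$ consists of $s$ vertices, and each such vertex $w$ has a ``left'' edge to one hub and a ``right'' edge to the next hub in its own component. I will give every such $w$ a complementary pair of labels $\{x,q+1-x\}$ with $1\le x\le sn$. This uses the $sn$ pairs exactly once, and every degree-$2$ vertex gets sum $q+1$.

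\textbf{Constant left sums.} Next I aim to make the sum $L$ of the $s$ left labels the same in every block $V_t$. Then the right labels of every block sum to $s(q+1)-L$. Each hub $u$ of degree $2s$ collects the right edges of one block and the left edges of the following block in its component, so its sum is $s(q+1)$. This holds whatever the component sizes $n_i$ are, because the count is purely local around $u$; that is why arbitrary $n_i\ge 3$ cause no trouble. Hubs then all receive $s(q+1)$ and degree-$2$ vertices all receive $q+1$. These differ since $s\ge 2$, so the labeling is a local antimagic $2$-coloring.

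\textbf{Choosing the pairs (main step).} This is where the evenness of $s$ is used. Assign to block $V_t$ the small numbers $x=(2k-2)n+t$ and $x'=(2k-1)n+t$ for $1\le k\le s/2$; these cover $1,\dots,sn$ exactly once. On the vertex carrying pair $x$, put $x$ on the left edge. On the vertex carrying pair $x'$, put $q+1-x'$ on the left edge. The left contribution of the $k$-th couple is then $x+q+1-x'=q+1-n$, independent of $t$. Hence $L=\tfrac{s}{2}(q+1-n)$ for every block.

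The remaining checks are routine: every label in $\{1,\dots,q\}$ is used once, and adjacent vertices get different sums ($q+1$ versus $s(q+1)$). The only delicate point was the sign-balancing in the last step, which needs an even number of pairs per block, i.e.\ $s$ even.
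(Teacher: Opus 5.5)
Your proof is correct and is essentially the paper's argument. The paper splits one hub in each component and re-merges them to get a single $B(s^{[n]})$. It labels that graph so that every degree-$2$ vertex gets a complementary pair and the $s$ edges from any block to either of its hubs have a block-independent total, then undoes the gluing; this is exactly your constant-left-sum property, which you impose directly on $G$. In your favour, your explicit labeling is verifiably a bijection, whereas the formulas printed in the paper's proof repeat labels (e.g.\ $f(u_iv_{i,1})=2i-1=f(u_{n+1-i}v_{n+1-i,2})$), so your construction is the one that actually certifies the claim.
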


\begin{proof}
	As we mentioned in section 1, $B(s^{[n_i]})$ is a bipartite graph for each $1\le i\le p$. Suppose two partitions for each component $B(s^{[n_i]})$ are $\left(\bigcup_{j=1}^{n_i}X_j^i,\bigcup_{j=1}^{n_i}Y_j^i\right)$ where $X_j^i=\left\{u_j^i, u_{j+1}^i\right\}$, $Y_j^i=\left\{v_{j,1}^i, v_{j,2}^i,\cdots,v_{j,s}^i\right\}$ for $u_{n_i+1}^i=u_1^i$, $1\le i\le p$. The edge set of $B(s^{[n_i]})$ is $\bigcup_{j=1}^{n_i}E(K_{2,s}^j)$ where $K_{2,s}^j=(X^i_j,Y^i_j)$ is a complete bipartite graph. Now split $u_1^i$ into $u_{1,1}^i$ and $u_{1,2}^i$ for $1\le i\le p$ such that $u_{1,1}^i$ is adjacent to $v_{1,k}^i$ and $u_{1,2}^i$ is adjacent to $v_{n_i,k}^i$ with $1\le k\le s$. Next, merge $u_{1,2}^i$ and $u_{1,1}^{i+1}$ into $u_i^0$ for $1\le i\le p$ and $u_{1,1}^{p+1}=u_{1,1}^1$.
	
	The graph we obtain now is isomorphism to $B(s^{[n]})$ as expressed in section 1. If we define bijection similar to Lemma~\ref{lem-B}, $f: E(B(s^{[n]})) \to \left\{1,2,\cdots,2ns\right\}$ as 
	\[f(u_iv_{i,2j-1})=2i-1+n(2j-2),\]
	\[f(u_iv_{i,2j})=(2n-2i+1)+n(2j-2),\]
	\[f(u_{i+1}v_{i,2j-1})=2ns-2(i-1)-n(2j-2),\]
	\[f(u_{i+1}v_{i,2j})=2ns-(2n-2i)-n(2j-2),\]
	for $1\le i\le n$, $1\le j\le s/2$ with $f^+(u_i)=2ns^2+s, f^+(v_{i,k})=2ns+1$, $1\le k\le s$, then a bijection $g: E(B(G))\to \left\{1,2,\cdots,2sn\right\}$  with $g^+(v)=f^+(v)$ for $v\in V(G)\setminus \left\{u_i^0\mid 1\le i\le p\right\}$ is admitted. Now, $f$ has properties:
    \begin{equation*}
        \begin{aligned}
            \sum_{j=1}^{s/2}[f(u_iv_{i,2j-1})+f(u_iv_{i,2j})]=\delta s^2;
            \sum_{j=1}^{s/2}[f(u_iv_{i-1,2j-1})+f(u_iv_{i-1,2j})]=3\delta s^2+s.
        \end{aligned}
    \end{equation*}

	Therefore,  $g^+(u_i^0)=\delta s^2+3\delta s^2+s=2ns^2+s$ such that $g$ is a local antimagic 2-coloring which completes the proof.
\end{proof}

\begin{corollary}\label{cor-K&C-even s}
	Suppose $n\ge 4$ is even, we have $\chi_{la}(K_{1,s}\times C_n)=2$ if $s$ is even.
\end{corollary}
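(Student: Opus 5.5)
The plan is to reduce the corollary to Theorem~\ref{th-K&C-even s}, using the observation made just before it: $K_{1,s}\times C_n$ is two copies of $B(s^{[\delta]})$, where $\delta=n/2$.

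\textbf{Step 1: the structure of $K_{1,s}\times C_n$.} Since $n$ is even, $C_n$ is bipartite. Hence $K_{1,s}\times C_n$ splits into two components:
\begin{itemize}
\item the first has vertices $(v,u_{2j-1})$ and $(v_k,u_{2j})$;
\item the second has vertices $(v,u_{2j})$ and $(v_k,u_{2j-1})$.
\end{itemize}
In the first component, the vertex $(v_k,u_{2j})$ is adjacent exactly to $(v,u_{2j-1})$ and $(v,u_{2j+1})$. So if we set $x_j=(v,u_{2j-1})$ for $1\le j\le \delta$, with $x_{\delta+1}=x_1$, then each $x_j$ is joined to the $s$ vertices $(v_k,u_{2j})$ and to the $s$ vertices $(v_k,u_{2j-2})$. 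The pair $\{x_j,x_{j+1}\}$ is completely joined to $\{(v_k,u_{2j}) : 1\le k\le s\}$, and these are all the edges. This is exactly $B(s^{[\delta]})$. The second component is handled symmetrically and is also $B(s^{[\delta]})$. Thus $K_{1,s}\times C_n\cong B(s^{[\delta]})+B(s^{[\delta]})$.

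\textbf{Step 2: apply Theorem~\ref{th-K&C-even s}.} Take $p=2$ and $n_1=n_2=\delta$, so that $n_1+n_2=n$.
\begin{itemize}
\item If $n\ge 6$, then $\delta\ge 3$, the hypothesis $n_i\ge 3$ holds, and the theorem gives $\chi_{la}=2$ directly. The lower bound $\chi_{la}\ge\chi\ge 2$ is automatic because the graph has edges and no $K_2$ component.
\end{itemize}

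\textbf{Step 3: the case $n=4$, which is the main obstacle.} Here $\delta=2$, so the theorem does not literally apply. The components are $B(s^{[2]})\cong K_{2,2s}$. I would proceed as follows.
\begin{itemize}
\item First, check that the split/merge construction in the proof of Theorem~\ref{th-K&C-even s} still works with $n_i=2$. Merging the two copies through split vertices produces $B(s^{[4]})$, and the labeling $f$ with its sum identities can be used unchanged. The requirement $n_i\ge 3$ seems to be needed only so that $B(s^{[n_i]})$ is defined as a simple graph with distinct $X_j$. The merged graph $B(s^{[4]})$ is fine, and after splitting back, each component $K_{2,2s}$ is simple.
\item Alternatively, label $2K_{2,2s}$ directly. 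Split $B(s^{[4]})$ at two opposite $u$-vertices to recover two copies of $K_{2,2s}$. The identities $\sum f(u_iv_{i,\cdot})=\delta s^2$ and $\sum f(u_iv_{i-1,\cdot})=3\delta s^2+s$ (computed there with the global $n$) ensure that each split-and-remerged vertex has sum $2ns^2+s$. Every degree-$2$ vertex keeps the sum $2ns+1$.
\end{itemize}
Either route yields a local antimagic $2$-coloring, so $\chi_{la}(K_{1,s}\times C_n)=2$ for every even $n\ge 4$ and even $s$.
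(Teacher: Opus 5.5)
Your proposal is correct and follows the paper's route. The paper obtains the corollary as the case $p=2$, $n_1=n_2=\delta$ of Theorem~\ref{th-K&C-even s}, using the observation that $K_{1,s}\times C_n$ consists of two copies of $B(s^{[\delta]})$.

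You go slightly further than the paper on one point. For $n=4$ one has $\delta=2$, which violates the hypothesis $n_i\ge 3$, and the paper passes over this case without comment. Your check that the split/merge argument still works closes it: the merged graph is $B(s^{[4]})$, and each re-merged vertex collects one block sum over $u_tv_{t,1},\dots,u_tv_{t,s}$ and one over $u_{t'}v_{t'-1,1},\dots,u_{t'}v_{t'-1,s}$, both independent of the index. Note also that $B(s^{[2]})\cong K_{2,2s}$ is already simple. So $n_i\ge 3$ is just an unnecessary restriction, not a simplicity requirement as you guessed.

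One caveat applies equally to the paper. The labeling $f$ printed in the proof of Theorem~\ref{th-K&C-even s} is not injective; for example, $f(u_1v_{1,1})=f(u_nv_{n,2})=1$. So it cannot literally be ``used unchanged.'' Replace the two $v_{i,2j}$-formulas by $f(u_iv_{i,2j})=2n-2i+2+n(2j-2)$ and $f(u_{i+1}v_{i,2j})=2ns-(2n-2i)-n(2j-2)-1$. This gives a bijection onto $\{1,\dots,2ns\}$ with every $v$-sum equal to $2ns+1$. The two block sums become $\delta s^2+s/2$ and $3\delta s^2+s/2$, still independent of $i$ and still totalling $2ns^2+s$, which is all your Step~3 needs.
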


\begin{corollary}
	The graph $G$ with $p$ components $K_{1,s}\times C_{n_i}$ has $\chi_{la}(G)=2$ where $n_i$, $s$ are even and $\sum_{i=1}^{p}n_i=n$.
\end{corollary}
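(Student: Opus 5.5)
The plan is to reduce the statement to Theorem~\ref{th-K&C-even s} (and Corollary~\ref{cor-K&C-even s}) by showing that the disjoint union $G$ of the graphs $K_{1,s}\times C_{n_i}$ is itself a disjoint union of graphs of the form $B(s^{[n_i/2]})$. First, each $n_i$ must be even and, for the graph $C_{n_i}$ to be a cycle, at least $4$. As observed before Theorem~\ref{th-K&C-even s}, with the notation $u_j^1,u_j^2,v_{j,k}^1,v_{j,k}^2$ and $\delta_i=n_i/2$, the graph $K_{1,s}\times C_{n_i}$ is isomorphic to two disjoint copies of $B(s^{[\delta_i]})$. I would verify this explicitly. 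The centre vertices $(v,u_{2j-1})$ are adjacent exactly to the leaf vertices $(v_k,u_{2j-2})$ and $(v_k,u_{2j})$. Hence one copy has hub vertices $(v,u_{\mathrm{odd}})$ and the other has hub vertices $(v,u_{\mathrm{even}})$. In each copy, the $s$ leaf vertices lying between two consecutive hubs form the set $V_j$ of size $s$, and the cyclic order of the hubs gives the $B(s^{[\delta_i]})$ structure.

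Consequently, $G$ is a disjoint union of $2p$ graphs $B(s^{[\delta_i]})$, two for each $i$, with $\sum 2\delta_i=n$. When every $\delta_i\ge 3$, Theorem~\ref{th-K&C-even s} applies directly with $2p\ge 2$ components and yields $\chi_{la}(G)=2$. That theorem works by splitting and merging the vertices $u_1^i$ to turn the union into a single $B(s^{[n]})$, and then pulling back the $2$-coloring labeling $f$. The lower bound $\chi_{la}(G)\ge\chi(G)=2$ is automatic, since $G$ is nonempty and has no $K_2$ component.

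The main obstacle is the case $n_i=4$, where $\delta_i=2$ and the component is $B(s^{[2]})\cong K_{2,2s}$, which violates the hypothesis $n_i\ge 3$ of Theorem~\ref{th-K&C-even s}. I would first try to check that the split-and-merge argument in that proof never uses $n_i\ge 3$. The only concern is simplicity of the merged graph: for $B(s^{[2]})$ the two hubs $u_1,u_2$ with vertex sets $V_1,V_2$ still form a "cycle of length 2", and splitting $u_1^i$ into its $V_1$-side and its $V_2$-side and then gluing the pieces cyclically produces $B(s^{[n/2\cdot 2]})=B(s^{[n]})$ with $n\ge 4$. No multiple edges arise, because the merged vertices come from different components. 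The vertex sums at the merged vertices $u_i^0$ equal $\delta s^2+3\delta s^2+s=2ns^2+s$ by the two partial-sum identities for $f$ stated in that proof, and these identities do not depend on the component sizes. Hence the labeling is still a local antimagic $2$-coloring.

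If a single component is present with $n_1=4$, so that $G=K_{1,s}\times C_4$ consists of two copies of $K_{2,2s}$, the same gluing of the two copies gives $B(s^{[4]})$ and the argument goes through unchanged. The case $p=1$ in general is exactly Corollary~\ref{cor-K&C-even s}, so combining all cases gives $\chi_{la}(G)=2$.
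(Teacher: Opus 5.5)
Your proposal is correct and is the argument the paper intends; the paper gives no separate proof, relying on the facts that each $K_{1,s}\times C_{n_i}$ is two copies of $B(s^{[n_i/2]})$ and that Theorem~\ref{th-K&C-even s} then applies. Your check that the split-and-merge step never uses the lower bound on the component parameter covers the case $n_i=4$ (components $K_{2,2s}$), which the paper passes over. One caveat inherited from the paper, not from your reasoning: the labeling $f$ printed in the proof of Theorem~\ref{th-K&C-even s} is not injective (e.g.\ $f(u_1v_{1,1})=f(u_nv_{n,2})=1$), but replacing $2n-2i+1$ by $2n-2i+2$ in $f(u_iv_{i,2j})$ and subtracting $1$ from $f(u_{i+1}v_{i,2j})$ gives a genuine bijection whose one-sided hub sums are still constant (now $\delta s^2+s/2$ and $3\delta s^2+s/2$), which is all your gluing argument uses.
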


\begin{example}
	The graph $K_{1,4}\times C_6$ has local antimagic 2-coloring as in Figure~\ref{K_1,4timesC_6}.
	\begin{figure*}[htbp]
	    \centering
	\begin{minipage}[b]{0.3\textwidth} 
		\centering
		\includegraphics[width=\textwidth]{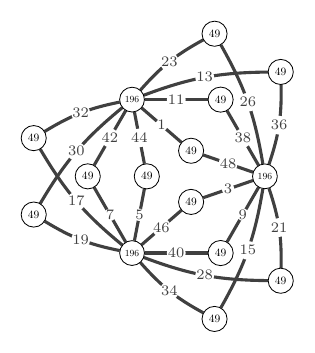}
	\end{minipage}
	\begin{minipage}[b]{0.3\textwidth} 
		\centering
		\includegraphics[width=\textwidth]{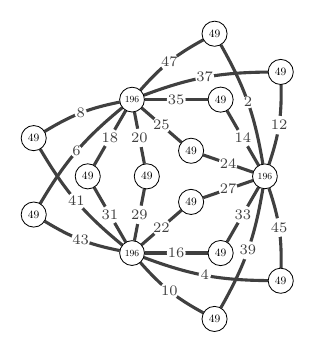}
	\end{minipage}	
    \caption{$K_{1,4}\times C_6$ with local antimagic 2-coloring}
    \label{K_1,4timesC_6}
	\end{figure*}

\end{example}


\begin{theorem}\label{th-K&C-n=2s}
	If $n\ge 4$ is even and $s=\delta$, then $\chi_{la}(K_{1,s}\times C_n)=2$.
\end{theorem}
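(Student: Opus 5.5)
Since $K_{1,s}\times C_n$ is bipartite and has no $K_2$ component, we already have $\chi_{la}(K_{1,s}\times C_n)\ge \chi(K_{1,s}\times C_n)=2$. So the task is to exhibit a local antimagic $2$-coloring. When $s=\delta$ is even, this is already given by Corollary~\ref{cor-K&C-even s}. The plan is therefore a direct construction that works for every $s=\delta$, and is needed only for odd $s$. As observed above, $K_{1,s}\times C_n$ is two disjoint copies of $B(s^{[\delta]})=B(s^{[s]})$. Each copy has $s$ vertices $u_1,\dots,u_s$ of degree $2s$ and $s^2$ vertices $v_{i,k}$ of degree $2$. The total size is $q=4s^2$.

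\emph{Target sums.} In a $2$-coloring with the $v$-vertices in one color class, every $v_{i,k}$ would have the same sum $a$. Each edge meets exactly one $v$-vertex, so $2s^2a=q(q+1)/2$, which forces $a=q+1=4s^2+1$. Likewise each edge meets exactly one $u$-vertex, and there are $2s$ of them, so the common $u$-sum is $b=s(q+1)$. Since $s\ge 2$, we have $a\ne b$.

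\emph{Reduction to equal row sums.} In each copy, set $f(u_iv_{i,k})=x_{i,k}$ and $f(u_{i+1}v_{i,k})=q+1-x_{i,k}$, with indices mod $s$. Then every $v_{i,k}$ has sum $q+1$. The sum at $u_{i+1}$ is $\sum_k x_{i+1,k}+s(q+1)-\sum_k x_{i,k}$. This equals $s(q+1)$ exactly when consecutive row sums of the $s\times s$ array $(x_{i,k})$ coincide. So it suffices for each copy's array to have all row sums equal; the two copies may use different common row sums.

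\emph{Construction.} For copy $A$, take $(x_{i,k})$ to be a magic square of order $s$ on the entries $1,\dots,s^2$. For copy $B$, take a magic square of order $s$ on $s^2+1,\dots,2s^2$, for instance the first square shifted by $s^2$. Magic squares exist for every order $s\ge 3$; since $n\ge4$ we have $s\ge 2$, and the case $s=2$ is even and already covered. The labels used are as follows. Copy $A$ uses $1,\dots,s^2$ together with their complements $3s^2+1,\dots,4s^2$. Copy $B$ uses $s^2+1,\dots,2s^2$ together with their complements $2s^2+1,\dots,3s^2$. Hence $f$ is a bijection onto $\{1,\dots,4s^2\}$. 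It induces exactly the two sums $4s^2+1$ and $s(4s^2+1)$ on the two sides of the bipartition, so it is a local antimagic $2$-coloring.

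The only real obstacle is the existence of the equal-row-sum arrangement. A naive attempt to split $1,\dots,2s^2$ into $2s$ rows with one common sum fails for odd $s$, because that sum would be $s(2s^2+1)/2$, which is not an integer. The fix is to give the two copies different common row sums, and magic squares make this immediate. For $s$ even one may alternatively just cite Corollary~\ref{cor-K&C-even s}.
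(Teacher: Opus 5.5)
Your proof is correct. It reaches the key step by a different route from the paper.

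\textbf{Shared skeleton.} Both arguments give the two edges at every degree-$2$ vertex labels summing to $q+1=n^2+1$. What remains is to make all hub sums equal.

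\textbf{The paper's route.} It uses an explicit cycle decomposition. Each copy of $B(s^{[\delta]})$ is split into $s$ edge-disjoint $n$-cycles $C^h_k$, and each cycle is labelled alternately large/small. The pair at a non-initial hub then sums to $n^2-1$, and the pair at the initial hub $u^h_k$ sums to $n^2+n-1$. The hypothesis $s=\delta$ is used exactly to make every hub the initial vertex of one cycle. This gives each hub $(n^2+n-1)+(s-1)(n^2-1)=s(n^2+1)$. In your notation, this is a labelling of your form with the explicit array $x_{i,k}=n^2-2(k-1)n-2\,((i-k)\bmod s)$ on the first copy. Its row sums are constant because of the cyclic shift.

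\textbf{Your route.} You isolate the condition abstractly: equal row sums within each copy, possibly different across copies. You meet it by quoting the existence of magic squares. This is more than you need, since column and diagonal sums play no role. Your split of the labels into quarters is also simpler than the paper's interleaving.

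\textbf{What each buys.} The paper's version is fully explicit and needs no outside existence theorem. Yours makes transparent why the two copies must carry different row sums when $s$ is odd. It also shows that $s=\delta$ serves only to make the array square. Any $\delta\times s$ array on $1,\dots,\delta s$ with equal row sums (for example an odd-by-odd magic rectangle) would run the same argument for other pairs $(s,\delta)$. Finally, you could drop the appeal to the even-$s$ corollary for $s=2$: the array with rows $1,4$ and $2,3$ already has equal row sums.
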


\begin{proof}
	We will get Corollary~\ref{cor-K&C-even s} when $\delta$ is even so that we prove the theorem for $n \equiv 2 \pmod{4}$. We just need to prove $K_{1,s}\times C_n$ has a local $2$-antimagic labeling.
	
	Observe that the edge set of $K_{1,s}\times C_n$ decomposes into $2s$ subsets with size $n$, $E(K_{1,s}\times C_n)=\bigcup_{h=1}^2\bigcup_{k=1}^s\left\{u_1^h,v_{1,k}^h,u_2^h,v_{2,k}^h, \cdots, u_{\delta}^1,v_{\delta,k}^1\right\}$. And ordered sequences of all edges in each subset make up a cycle $C^h_{k}$ with consecutive edges $e_{1,k}^h, e_{2,k}^h, \cdots, e_{n,k}^h$ where $e_{1,k}^h=(u_k^h,v_{k,k}^h)$ (ordered sequence which means we can determine remaining edges) for $h=1,2$, $k\in [1,\delta]$. For instance, $K_{1,2}\times C_4$ is shown in Figure~\ref{fig: graph6}.
	
    \begin{figure*}[htbp]
	    \centering
    	\begin{minipage}[b]{0.4\textwidth} 
    		\centering
    		\includegraphics[width=\textwidth]{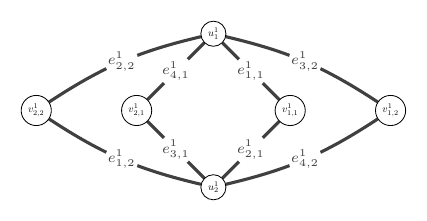}
    	\end{minipage}
    	\begin{minipage}[b]{0.4\textwidth} 
    		\centering
    		\includegraphics[width=\textwidth]{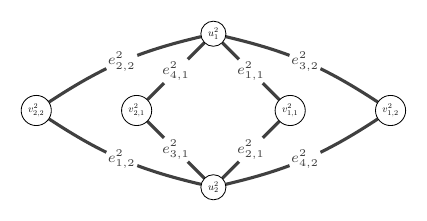}
    	\end{minipage}	
             \vspace{-4mm} 
    	\caption{$K_{1,2}\times C_4$}
    	\label{fig: graph6}
    \end{figure*}
	
	We label the edges according to $2s$ cycles $C^h_k$ where $h=1,2$ and $k\in [1,s]$. Now, define bijection $f: E(K_{1,s}\times C_n)\to n^2$ as
	\[f(e_{2j-1,k}^1)=n^2-2(k-1)n-2(j-1),\]
	\[f(e_{2j,k}^1)=2(k-1)n+2j-1,\]
	\[f(e_{2j-1,k}^2)=n^2-(2k-1)n-2(j-1),\]
	\[f(e_{2j,k}^2)=(2k-1)n+2j-1,\]
	where $j\in [1,\delta]$ and $k\in [1,s]$.
	
	As we can observe, 
	\[f(e_{1,i}^h)+f(e_{n,i}^h)=n^2+n+1\text{~for~} h=1,2\text{~and~} i\in [1,\delta],\]
	\[f(e_{2j,k}^h)+f(e_{2j+1,k}^h)=n^2-1\text{~for~} k\in[1,s],h=1,2\text{~and~} j\in[1,\delta-1],\]
	\[f(e_{2r-1,k}^h)+f(e_{2r,k}^h)=n^2+1\text{~for~}k\in[1,s], h=1,2\text{~and~}  r\in [1,\delta].\]
	Due to the properties of $K_{1,s}\times C_n$:
	\begin{enumerate}
		\item[(1)] In $C_i^h$, $u_i^h$ is incident to $e_{1,i}^h$ and $e_{n,i}^h$ because of the edge definition of $C_i^h$ for $i\in [1,\delta]$ and $h=1,2$;
		\item[(2)] For each $k\in [1,s]$, $k\ne i$, $h=1,2$, $\exists j\in [1,\delta]$ such that $u_i^h$ is incident to $e_{2j,k}^h$ and $e_{2j+1,k}^h$ in $C_k^h$;
		\item[(3)] For $h=1,2$, $k\in[1,s]$ and $i\in [1,\delta]$, $\exists r\in[1,\delta]$ such that $v_{i,k}^h$ is incident to $e_{2r-1,k}^h$ and $e_{2r,k}^h$,
		
	\end{enumerate}
	 $f^+(u_i^h)=n^2+n-1+(s-1)(n^2-1)$ and $f^+(v_{i,k}^h)=n^2+1$ where  $k\in [1,s]$, $h=1,2$ and $i\in [1,\delta]$. Therefore, $f$ is a local antimagic 2-coloring of $K_{1,s}\times C_n$.
\end{proof}

\newpage
\begin{example} We can label the edges of $K_{1,3}\times C_6$ in Figure~\ref{K_1,3timesC_6} as shown in Theorem \ref{th-K&C-n=2s}.

\begin{figure*}[ht]
	    \centering
    	\begin{minipage}[b]{0.28\textwidth} 
    		\centering
    		\includegraphics[width=\textwidth]{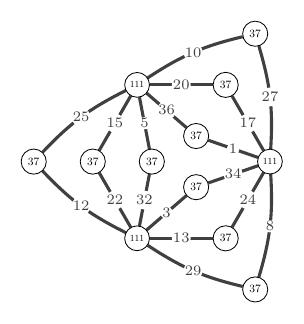}
    	\end{minipage}
    	\begin{minipage}[b]{0.28\textwidth} 
    		\centering
    		\includegraphics[width=\textwidth]{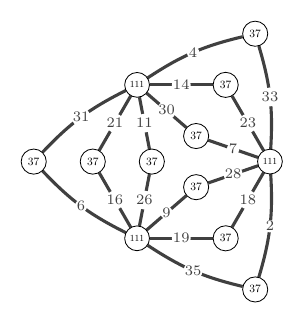}
    	\end{minipage}	
                   \vspace{-4mm} 
    	\caption{$K_{1,3}\times C_6$ with local antimagic 2-coloring}
    	\label{K_1,3timesC_6}
\end{figure*}

\end{example}

\begin{theorem}\label{th-K&C-merge}
	If $n\ge 4$ and $s$ are even and graph $G_1$ is obtained from $K_{1,s}\times C_n$ by merging $v_{i,k}^1$ and $v_{i,k}^2$ into $v_{i,k}$, graph $G_2$ is obtained from $K_{1,s}\times C_n$ by merging $u_i^1$ and $u_i^2$ into $u_i$ for $i\in [1,\delta]$, $k\in [1,s]$, then  $\chi_{la}(G_1)=\chi_{la}(G_2)=2$.
\end{theorem}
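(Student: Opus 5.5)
The plan is to get the lower bound for free and to build the labeling for the upper bound from the 2-coloring already found for $K_{1,s}\times C_n$, since merging only adds up vertex sums.

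\textbf{Lower bound and simplicity.} Neither $G_1$ nor $G_2$ has a $K_2$ component, so $\chi_{la}(G_i)\ge\chi(G_i)\ge 2$. I will check that both graphs are simple and bipartite. In $K_{1,s}\times C_n$ every edge joins a vertex $(v,\cdot)$ to a vertex $(v_k,\cdot)$. Both merges identify two vertices of the same type, so the bipartition (centre-type vertices, leaf-type vertices) survives.
\begin{itemize}
\item For $G_1$: $v_{i,k}^1=(v_k,u_{2i})$ has neighbours $(v,u_{2i\pm1})$, while $v_{i,k}^2=(v_k,u_{2i-1})$ has neighbours $(v,u_{2i-2}),(v,u_{2i})$. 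These sets are disjoint, so no multiple edges arise.
\item For $G_2$: the neighbourhoods of $u_i^1=(v,u_{2i-1})$ and $u_i^2=(v,u_{2i})$ are likewise disjoint.
\end{itemize}

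\textbf{Upper bound.} I will reuse the labeling from the proof of Theorem~\ref{th-K&C-even s} with $p=2$ and $n_1=n_2=\delta$. Corollary~\ref{cor-K&C-even s} states that this gives a local antimagic 2-coloring of $K_{1,s}\times C_n$ (two copies of $B(s^{[\delta]})$) in which every centre-type vertex has sum $a=2ns^2+s$ and every leaf-type vertex has sum $b=2ns+1$, with $q=2ns$. The merges do not change the edge set, so keeping all edge labels gives a bijection on $E(G_1)$ and on $E(G_2)$.
\begin{itemize}
\item In $G_2$, each merged vertex $u_i$ gets sum $2a=4ns^2+2s$, which is even, and each leaf-type vertex keeps sum $b$, which is odd. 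Adjacent vertices always differ, so $c=2$.
\item In $G_1$, the centre-type vertices keep sum $a$ and each merged vertex $v_{i,k}$ gets sum $2b=4ns+2$. We have $a=2b$ exactly when $s(2ns+1)=2(2ns+1)$, that is, when $s=2$. So for even $s\ge4$ the colors differ and we get a local antimagic 2-coloring.
\end{itemize}

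\textbf{Main obstacle: $G_1$ with $s=2$.} I expect this case to be the main obstacle, and the existing labeling cannot handle it. The first remedy would be the complementary labeling of Lemma~\ref{lem-q+1-f}, but it does not help: after the merge every vertex has degree $4$, so the two sums again coincide.

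There is also a counting argument against any fix. Here $G_1$ is $4$-regular bipartite with $n$ centre vertices and $\delta s=n$ merged leaf vertices. Any local antimagic 2-coloring of a connected bipartite graph has its color classes equal to the bipartition classes. Summing over each class gives the total edge-label sum, so two classes of equal size would have to receive the same constant, which is a contradiction.

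I would therefore first check whether $G_1$ is connected when $s=2$. If it is, the argument above shows the statement must be restricted to even $s\ge4$, or else proved only for $G_2$ when $s=2$. If some component structure allows the colors to swap between components, I would instead construct a labeling component by component, in the manner of Theorem~\ref{th-K&C-even s}.
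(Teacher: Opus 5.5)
Your analysis is correct, and on $G_1$ it is more careful than the paper. For $s\ge 4$ your argument for $G_1$ is exactly the paper's: keep the labels of the $2$-coloring from Theorem~\ref{th-K&C-even s}, so that each merged vertex receives $4ns+2$. The paper, however, declares the result a local antimagic $2$-coloring without ever comparing $4ns+2$ with the centre sum $2ns^2+s$. As you found, these coincide precisely when $s=2$.

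The check you left open settles the matter. $G_1$ is connected, because each merged vertex $v_{i,k}$ is adjacent to $u_i^1,u_{i+1}^1$ in one copy of $B(s^{[\delta]})$ and to $u_{i-1}^2,u_i^2$ in the other, so it joins the two connected copies. Hence for $s=2$, $G_1$ is a connected bipartite graph whose two parts (the $n$ centre vertices and the $\delta s=n$ merged leaves) have equal size. Your counting argument then gives $\chi_{la}(G_1)\ge 3$. The smallest case is already a counterexample: for $n=4$, $s=2$ every $v_{i,k}$ is adjacent to all four centres, so $G_1\cong K_{4,4}$. So the statement is false as written. The $G_1$ part needs $s\ge 4$, while the $G_2$ part holds for every even $s$.

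For $G_2$ your route differs from the paper's. The paper identifies $G_2$ with $B((2s)^{[\delta]})$ and then cites Lemma~\ref{lem-B}. The isomorphism holds because each leaf $(v_k,u_{2i})$ or $(v_k,u_{2i+1})$ is joined exactly to the merged vertices $u_i$ and $u_{i+1}$, although the paper's own description of the blocks is garbled. This leans on the published result for $B(m^{[n]})$. However, that lemma is stated only for $\delta\ge 3$, so $n=4$, where $G_2\cong K_{2,4s}$, would need a separate reference. Your parity argument ($2a$ even, $b$ odd) needs no structural identification at all.

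One caveat applies to your whole upper bound, and equally to the paper's treatment of $G_1$. As printed, the labeling in the proof of Theorem~\ref{th-K&C-even s} is not injective: for example, $f(u_1v_{1,1})=f(u_nv_{n,2})=1$. Adding $ns$ to every $f(u_iv_{i,2j})$ and subtracting $ns$ from every $f(u_{i+1}v_{i,2j})$ repairs it. The repaired labeling gives each $u_i$ of $B(s^{[n]})$ the sum $ns^2$ on its edges to $V_i$ and $ns^2+s$ on its edges to $V_{i-1}$. So after splitting you do obtain the constants $a=2ns^2+s$ and $b=2ns+1$ that your argument uses.
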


\begin{proof}
	
	Define $g: E(K_{1,s}\times C_n)\to \left\{1,2,\cdots,2ns\right\}$ as shown in Theorem \ref{th-K&C-even s} which indicates a bijection $h: E(G_1)\to \left\{1,2,\cdots,2ns\right\}$ with 
	\[h^+(v) = g^+(v)\text{ for }\forall v \in V(G_1)\setminus \left\{v_{i,k}\mid 1\le i\le \delta, 1\le k \le s\right\},\]
	\[h^+(v_{i,k}) = g^+(v_{i,k}^1)+g^+(v_{i,k}^2)= 2(2ns+1)=4ns+2\text{ for }1\le i\le \delta, 1\le k\le s.\]
	
	Therefore $h$ is a local antimagic
2-coloring which implies that $\chi_{la}(G_1)=2$.

Actually, the graph $G_2$ is bipartite with bipartition $(\bigcup_{i=1}^{\delta}X_i, \bigcup_{i=1}^{\delta}Y_i)$, where $X_i=\{u_i^1, u_{i+1}^1,$ $ u_i^2, u_{i+1}^2\}$, $Y_i = \left\{v_{i,1}, v_{i,2},\cdots, v_{i,s}\right\}$ and edge set $\bigcup_{i=1}^{\delta}E(K^i_{4,s})$ where $K^i_{4,s}$ is complete bipartite with partition $(X_i,Y_i)$ for $1\le i\le \delta$,  $u_{\delta+1}^1=u_1^1$ and $u_{\delta+1}^2=u_1^2$. This means $G_2$ is $B((2s)^{[\delta]})$ and $\chi_{la}(G_2)=2$ by Lemma \ref{lem-B}.   \end{proof}

\begin{example}
	Graph $G_1$ is obtained from $K_{1,4}\times C_6$ by merging $v_{i,p}^1$ and $v_{i,p}^2$ , graph $G_2$ is obtained by merging $u_i^1$ and $u_i^2$ for $i=1,2,3$ and $k=1,2,3,4$. Their local antimagic 2-colorings are shown as Figure~\ref{graph-K-C-merge-v} and Figure~\ref{graph-K-C-merge-u} respectively.

\begin{figure*}[ht]
	    \centering
    	\begin{minipage}[b]{0.39\textwidth} 
    		\centering
    		\includegraphics[width=\textwidth]{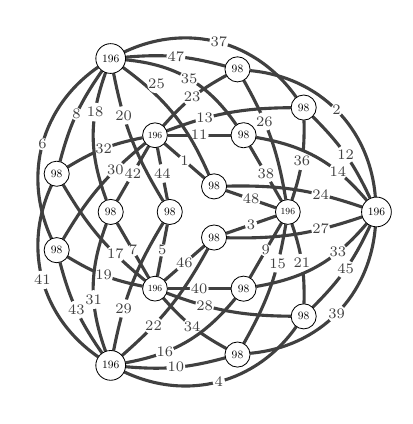}
         \caption{$G_1$}
            \label{graph-K-C-merge-v}
    	\end{minipage}
    	\begin{minipage}[b]{0.39\textwidth} 
    		\centering
    		\includegraphics[width=\textwidth]{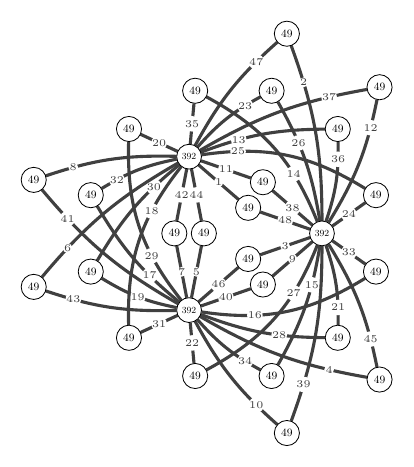}
                 \caption{$G_2$}
            \label{graph-K-C-merge-u}
    	\end{minipage}	
\end{figure*}
\end{example}

\newpage
\section{Conclusions and Open Problems}

 We showed that there are infinitely many bipartite and tripartite graphs with local antimagic chromatic number 2 and 3 respectively. Particularly, we initiated the study of the local antimagic chromatic number of the middle graph and the direct product of graphs. The following problems arise naturally.

\begin{prb}
	Determine $\chi_{la}(G(m^{[n]}))$ for even $m\ge 2$ and even $n\ge 4$.
\end{prb}

\begin{prb}
	Determine $\chi_{la}(G(m^{[n]}))$ for odd $n\ge 3$.
\end{prb}

\begin{prb}
	Determine $\chi_{la}(K_{1,s}\times C_n)$ for odd $n\ge 3$.
\end{prb}

\begin{prb}
        Determine $\chi_{la}(G)$ where $G$ is the middle graph of a graph.
\end{prb}

\begin{prb}
        Determine $\chi_{la}(G \times H)$ for various graphs $G$ and $H$.
\end{prb}


\providecommand{\bysame}{\leavevmode\hbox
to3em{\hrulefill}\thinspace}


\end{document}